	\definecolor{Blue}{HTML}{3d25b9}
	\definecolor{Red}{HTML}{c00054}
	\titlespacing{\section}{0pt}{12pt}{0pt}
	\titlespacing{\subsection}{0pt}{6pt}{0pt}
\setlist{topsep=0pt,itemsep=0pt}
\DeclareMathOperator\Jac{Jac}
\newcommand{\Ch}{\mathrm{Chiodo}}
\newcommand{\oM}{\overline{\mathcal M}}
\newcommand{\Z}{\mathbb{Z}}
\newcommand{\ch}{\mathrm{ch}}
\newcommand{\Pic}{\operatorname{Pic}}
\newcommand{\oPic}{\overline{\Pic}}
\newcommand{\Klog}{\omega_{\rm log}}
\newcommand{\cM}{{\mathcal{M}}}
\newcommand{\cL}{{\mathcal{L}}}
\newcommand{\SSS}{\mathcal{S}}
	\crefname{equation}{equation}{equations}
\theoremstyle{plain}
	\newtheorem{theorem}{Theorem}
	\newtheorem{proposition}[theorem]{Proposition}
	\newtheorem{lemma}[theorem]{Lemma}
	\newtheorem{conjecture}[theorem]{Conjecture}
	\numberwithin{theorem}{section}
\theoremstyle{definition}
	\newtheorem{definition}[theorem]{Definition}
\theoremstyle{remark}
	\newtheorem*{remark*}{Remark}
	\newtheorem{remark}[theorem]{Remark}
\title{On the Goulden--Jackson--Vakil conjecture for double Hurwitz numbers}
\author{Norman Do and Danilo Lewa\'{n}ski}
\begin{document}

\textbf{{\large \thetitle}}

\textbf{\theauthor}

School of Mathematics, Monash University, VIC 3800 Australia \\
Email: \href{mailto:norm.do@monash.edu}{norm.do@monash.edu}

Universit\'e Paris-Saclay, CNRS, CEA, Institut de physique th\'eorique (IPhT), 91191 Gif-sur-Yvette, France. \\
Institut des Hautes \'Etudes Scientifiques, le Bois-Marie, 35 route de Chartres, 91440 Bures-sur-Yvette, France. \\
Email: \href{mailto:danilo.lewanski@ipht.fr}{danilo.lewanski@ipht.fr}

{\em Abstract.} Goulden, Jackson and Vakil observed a polynomial structure underlying one-part double Hurwitz numbers, which enumerate branched covers of $\mathbb{CP}^1$ with prescribed ramification profile over $\infty$, a unique preimage over 0, and simple branching elsewhere. This led them to conjecture the existence of moduli spaces and tautological classes whose intersection theory produces an analogue of the celebrated ELSV formula for single Hurwitz numbers.

In this paper, we present three formulas that express one-part double Hurwitz numbers as intersection numbers on certain moduli spaces. The first involves Hodge classes on moduli spaces of stable maps to classifying spaces; the second involves Chiodo classes on moduli spaces of spin curves; and the third involves tautological classes on moduli spaces of stable curves. We proceed to discuss the merits of these formulas against a list of desired properties enunciated by Goulden, Jackson and Vakil. Our formulas lead to non-trivial relations between tautological intersection numbers on moduli spaces of stable curves and hints at further structure underlying Chiodo classes. The paper concludes with generalisations of our results to the context of spin Hurwitz numbers.

{\em Acknowledgements.} The first author was supported by the Australian Research Council grant DP180103891. The second author was supported by: the Max-Planck-Gesellschaft; the ERC Synergy grant ``ReNewQuantum'' at IPhT and IH\'{E}S, Paris, France; and the Robert Bartnik Visiting Fellowship at Monash University, Melbourne, Australia. The authors are grateful to Gaëtan Borot, Alessandro Chiodo, Alessandro Giacchetto, Paul Norbury, Sergey Shadrin, Mehdi Tavakol, and Dimitri Zvonkine for valuable discussions. We are moreover grateful to Johannes Schmitt for the great help and effort adapting the Sage package \textit{admcycles} to include the possibility of intersecting Chiodo classes of arbitrary parameters, which allowed to check numerically cases of Theorem \ref{thm:comparison} and we believe will turn out valuable many times in the future.

{\em 2010 Mathematics Subject Classification.} 14H10, 14N10, 05A15

\vspace{10pt}

\hrule

\vspace{8pt}

\tableofcontents

\section{Introduction} \label{sec:introduction}

The enumeration of branched covers of Riemann surfaces dates back to Hurwitz~\cite{hur} but has seen a revival in recent decades due to connections with moduli spaces of curves~\cite{ELSV}, integrability~\cite{oko}, and mathematical physics~\cite{bou-mar,eyn-mul-saf}. One catalyst for this renaissance was the discovery of the ELSV formula, which expresses single Hurwitz numbers as tautological intersection numbers on moduli spaces of curves.

The single Hurwitz number $h_{g; \mu}$ is the weighted enumeration of connected genus $g$ branched covers of $\mathbb{CP}^1$ with ramification profile $\mu$ over $\infty$, such that all other ramification is simple and occurs over prescribed points of $\mathbb{C}^*$. We attach the following weight to such a branched cover $f$.
\begin{equation} \label{eq:hurwitzweight}
\frac{|\mathrm{Aut}(\mu)|}{|\mathrm{Aut}(f)| \times (2g - 2 + \ell_0 + \ell_\infty)!}
\end{equation}
Here, $\ell_0$ and $\ell_\infty$ denote the numbers of preimages of 0 and $\infty$, respectively. The expression $2g - 2 + \ell_0 + \ell_\infty$ appearing in the weight is thus equal to the number of simple branch points, as specified by the Riemann--Hurwitz formula. The group $\mathrm{Aut}(\mu)$ comprises permutations of $\mu = (\mu_1, \ldots, \mu_n)$ that leave the tuple fixed, while the group $\mathrm{Aut}(f)$ attached to the branched cover $f: C \to \mathbb{CP}^1$ comprises Riemann surface automorphisms $\phi: C \to C$ that satisfy $f \circ \phi = f$. The factor $\frac{1}{|\mathrm{Aut}(f)|}$ appearing in~\cref{eq:hurwitzweight} is natural from the perspective of enumerative geometry, while the other factors produce a normalisation that makes the structure of single Hurwitz numbers more transparent. For a more thorough introduction to Hurwitz numbers, we point the reader to the literature~\cite{lan-zvo,cav-mil}.

The celebrated ELSV formula expresses single Hurwitz numbers as tautological intersection numbers on moduli spaces of stable curves in the following way.

\begin{theorem}[Ekedahl, Lando, Shapiro and Vainshtein~\cite{ELSV}]
For integers $g \geq 0$ and $n \geq 1$ with $(g,n) \neq (0,1)$ or $(0,2)$, the single Hurwitz numbers satisfy
\begin{equation} \label{eq:ELSV}
h_{g; \mu_1, \ldots, \mu_n} = \prod_{i=1}^n \frac{\mu_i^{\mu_i}}{\mu_i!} \int_{\oM_{g,n}} \frac{\sum_{k=0}^g (-1)^k \lambda_{k}}{\prod_{i=1}^n (1 - \mu_i \psi_i)}.
\end{equation}
\end{theorem}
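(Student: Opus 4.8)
The plan is to realise the single Hurwitz number as a relative Gromov--Witten invariant of $\mathbb{P}^1$ and then to extract \cref{eq:ELSV} by torus localisation, which is the route I expect to produce the Hodge and $\psi$-classes most transparently. First I would identify $h_{g;\mu}$ with an integral over the moduli space of genus-$g$ relative stable maps to $(\mathbb{P}^1, \infty)$ with ramification profile $\mu$ over $\infty$; by the Gromov--Witten/Hurwitz correspondence of Okounkov and Pandharipande, the $r = 2g-2+n+|\mu|$ simple branch points over the remaining locus are imposed through the Fantechi--Pandharipande branch morphism, and the normalising factors $|\mathrm{Aut}(\mu)|$, $|\mathrm{Aut}(f)|^{-1}$ and $(2g-2+\ell_0+\ell_\infty)!^{-1}$ of \cref{eq:hurwitzweight} are chosen precisely to match the Gromov--Witten normalisation. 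This reduces the theorem to a statement about a virtual integral on a proper Deligne--Mumford stack.

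The second step is to apply the Graber--Vakil virtual localisation theorem to the $\mathbb{C}^\ast$-action on $\mathbb{P}^1$ fixing $0$ and $\infty$. The fixed loci are indexed by decorated graphs in which the source curve decomposes into subcurves contracted over $0$ or $\infty$ together with rational components that multiply cover $\mathbb{P}^1$, totally ramified over the two fixed points. I would single out the distinguished fixed locus on which a single genus-$g$, $n$-pointed curve is contracted over $0$ and each of its $n$ special points carries a rational tail that is a degree-$\mu_i$ cover of $\mathbb{P}^1$ fully ramified over both endpoints. On this locus the moduli space $\oM_{g,n}$ appears, with its $n$ markings the attaching nodes, and the localisation contribution assembles the three ingredients of the formula: the total Chern class $c(\mathbb{E}^\vee) = \sum_{k=0}^g (-1)^k \lambda_k$ of the dual Hodge bundle emerges from the moving part of the obstruction theory along the contracted curve; each denominator $1 - \mu_i \psi_i$ comes from inverting the Euler class of the line bundle smoothing the $i$-th node, the equivariant weight producing the geometric series $\sum_k (\mu_i \psi_i)^k$; and the prefactor $\mu_i^{\mu_i}/\mu_i!$ collects the equivariant edge weights and automorphisms of the fully ramified genus-$0$ tail of degree $\mu_i$.

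The main obstacle, and the technical heart of the argument, will be to show that every other fixed locus contributes only lower-order terms, so that in the non-equivariant limit the distinguished term alone survives. Concretely, the localised integral is a priori a rational function of the equivariant parameter whose pole structure is constrained by dimension, and one must prove that the contributions of the remaining graphs --- those in which positive genus or extra degree is distributed among several vertices, or in which components map over $\infty$ --- cancel or telescope, so that the finite part reduces to the single stated integral. Controlling these cancellations requires careful bookkeeping of the $\mathbb{C}^\ast$-weights together with the string- and dilaton-type relations among Hodge integrals, and verifying genuine independence of the equivariant parameter. An alternative to this entire scheme is the original approach of Ekedahl, Lando, Shapiro and Vainshtein, who work directly with the Hurwitz space: they compactify the space of covers, stratify it by the degenerations of the branched cover, and compute the class of the branch divisor, obtaining the Hodge class from the variation of the cover and the $\psi$-factors from the local monodromy over $\infty$. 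I would expect the localisation route to be cleaner to organise, with the graph cancellation being the one genuinely delicate point.
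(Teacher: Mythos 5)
The paper itself contains no proof of this statement: it is quoted as background and attributed to~\cite{ELSV}, so there is nothing internal to compare against, and your proposal must be judged on its own terms. On those terms, your overall route is the known Graber--Vakil proof of the ELSV formula via relative stable maps to $\mathbb{P}^1$ and virtual localisation, and your description of the distinguished fixed locus is accurate: the contracted genus-$g$ curve over $0$ produces $\oM_{g,n}$ and the class $\sum_{k}(-1)^k\lambda_k$ from the moving part of the obstruction theory, the node-smoothing weights produce the factors $(1-\mu_i\psi_i)^{-1}$, and the equivariant edge weights of the totally ramified degree-$\mu_i$ tails produce $\mu_i^{\mu_i}/\mu_i!$.

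The genuine gap is in your third step. You propose to dispose of all remaining fixed loci by showing their contributions ``cancel or telescope'' after careful bookkeeping; as stated this is not an argument, and it is also not how the localisation proof actually closes. The standard mechanism is different: one writes $h_{g;\mu}$ as the virtual integral of $\mathrm{br}^*[\mathrm{pt}]$, where $\mathrm{br}$ is the Fantechi--Pandharipande branch morphism to $\mathrm{Sym}^r\mathbb{P}^1$ that you invoke only to match normalisations, and then one chooses the \emph{equivariant lift} of the point class to be the $\mathbb{C}^*$-fixed divisor $r\cdot 0\in\mathrm{Sym}^r\mathbb{P}^1$, i.e.\ all simple branching concentrated at $0$. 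With this lift, any fixed locus carrying a contracted component, node, or rubber configuration over $\infty$ --- equivalently, any branching over $\infty$ beyond the prescribed profile $\mu$ --- pairs to zero with the localised class identically, so no cancellation between graphs is ever needed; connectedness of the source curve then forces the single distinguished graph to be the only contributor. Without this choice (or an equivalent vanishing mechanism), the sum over fixed loci genuinely contains many nonzero terms, and establishing their mutual cancellation by hand is a substantially harder problem than the theorem itself. The missing idea, concretely, is to use the branch morphism \emph{equivariantly} to select the fixed locus, rather than non-equivariantly to define the count and then hope the graph sum collapses.
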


The integral in~\cref{eq:ELSV} is over the moduli space of stable curves $\overline{\mathcal M}_{g,n}$ and yields a symmetric polynomial in $\mu_1, \ldots, \mu_n$ of degree $3g-3+n$. This polynomiality had previously been observed in small cases and conjectured in general by Goulden, Jackson and Vainshtein~\cite{gou-jac-vai}. More generally, other ``ELSV formulas'' exist, which relate enumerative problems to intersection theory on moduli spaces, such as the formula for orbifold Hurwitz numbers of Johnson, Pandharipande and Tseng~\cite{joh-pan-tse}.

It is natural to consider double Hurwitz numbers $h_{g; \mu, \nu}$, which enumerate connected genus $g$ branched covers of $\mathbb{CP}^1$ with ramification profiles $\mu$ and $\nu$ over $\infty$ and 0, respectively, such that all other ramification is simple and occurs over prescribed points of $\mathbb{C}^*$. The weight attached to such a branched cover is taken to be precisely as in~\cref{eq:hurwitzweight}. Although double Hurwitz numbers have received some attention in the literature, various open questions remain. In particular, the present work focuses on the compelling Goulden--Jackson--Vakil conjecture concerning one-part double Hurwitz numbers, which are defined as follows.

\begin{definition}
Let $h_{g; \mu}^{\textnormal{one-part}}$ denote the double Hurwitz number $h_{g; \mu; \nu}$, where $\nu$ is the partition with precisely one part, which is equal to $|\mu|$. (For a partition $\mu$, we use the standard notation $|\mu| = \mu_1 + \mu_2 + \cdots + \mu_{\ell(\mu)}$, where $\ell(\mu)$ denotes the number of parts of $\mu$.)
\end{definition}

Goulden, Jackson and Vakil proved that for fixed $g$ and $n$, the one-part double Hurwitz number $h_{g; \mu_1, \ldots, \mu_n}^{\textnormal{one-part}}$ is a polynomial in $\mu_1, \ldots, \mu_n$. More precisely, it is of the form $\mu_1 + \cdots + \mu_n$ multiplied by a polynomial of degree $4g-3+n$ in which all monomials have the same parity of degree. Thus, they were led to conjecture the following, in direct analogy with the original ELSV formula for single Hurwitz numbers.

\begin{conjecture}[Goulden, Jackson and Vakil~\cite{gou-jac-vak}] \label{con:GJV}
For integers $g \geq 0$ and $n \geq 1$ with $(g,n) \neq (0,1)$ or $(0,2)$, there exists a moduli space $\oPic_{g,n}$ with classes $\Lambda_{2k} \in H^{4k}(\oPic_{g,n})$ and $\Psi_i \in H^2(\oPic_{g,n})$ such that the one-part double Hurwitz numbers satisfy
\[
h_{g; \mu_1, \ldots, \mu_n}^{\textnormal{one-part}} = (\mu_1 + \cdots + \mu_n) \int_{\oPic_{g,n}} \frac{\sum_{k=0}^g (-1)^k \Lambda_{2k}}{\prod_{i=1}^n (1 - \mu_i \Psi_i)}.
\]
\end{conjecture}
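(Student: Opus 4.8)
The plan is to prove the conjecture by identifying one-part double Hurwitz numbers with a distinguished specialisation of orbifold Hurwitz numbers, for which an ELSV-type formula is already available, and then to transport that formula across a chain of moduli spaces until it lands on a space of stable curves. First I would record the representation-theoretic formula for $h^{\textnormal{one-part}}_{g;\mu}$ coming from the Frobenius character formula, with ramification data $\mu$ over $\infty$ and the single full cycle $(|\mu|)$ over $0$. The crucial simplification is that the irreducible character $\chi^\lambda$ evaluated on a full $m$-cycle, where $m = |\mu|$, vanishes unless $\lambda$ is a hook $(m-a, 1^a)$, in which case it equals $(-1)^a$. This collapses the sum over partitions to a sum over the hook parameter $a \in \{0, \dots, m-1\}$, and since the contents of a hook form $m$ consecutive integers, every resulting quantity is a power sum of contents, hence polynomial in $a$ and, after summation, polynomial in the $\mu_i$. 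This simultaneously recovers the Goulden--Jackson--Vakil polynomiality and exposes the Bernoulli-polynomial structure that one expects to match a Chiodo-class computation.

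Second, I would observe that the one-part condition $\nu = (|\mu|)$ is precisely the maximally twisted orbifold Hurwitz problem: setting $r = m = |\mu|$, the orbifold profile $(r^{m/r})$ over $0$ degenerates to the single part $(m)$, so $h^{\textnormal{one-part}}_{g;\mu}$ equals the orbifold Hurwitz number at twist $r = |\mu|$, and the Johnson--Pandharipande--Tseng formula applies. Crucially, because $r = m > \mu_i$ for each $i$ when $n \geq 2$, the per-point combinatorial prefactors of that formula, which depend on $\mu_i$ and its residue modulo $r$ (schematically $\mu_i^{\lfloor \mu_i/r\rfloor}/\lfloor \mu_i/r\rfloor!$), all trivialise, leaving only an overall power of $r = m$ and the bare factor $\mu_1 + \cdots + \mu_n$ demanded by the conjecture. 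This produces the first formula: a Hodge integral of $\sum_k (-1)^k \lambda_k$ against $\prod_i (1 - \mu_i \psi_i)^{-1}$, after normalising the orbifold $\psi$-classes, over the moduli space $\oM_{g,n}(B\Z_m)$ of stable maps to the classifying space.

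Third, using Chiodo's computation of the Chern character of the derived pushforward of the universal $r$-th root, I would rewrite the Hodge class on $\oM_{g,n}(B\Z_m)$ as a $\Ch$ class on the moduli space of $m$-spin curves, giving the second formula. Finally, pushing the $\Ch$ class forward to $\oM_{g,n}$ and invoking the polynomiality in the order $r$ of Chiodo-class intersection numbers, I would evaluate at $r = m$ and repackage the output as a fixed combination of tautological $\kappa$-, $\lambda$- and $\psi$-classes, thereby obtaining the third formula with $\oPic_{g,n} = \oM_{g,n}$ and reading off the classes $\Lambda_{2k}$ and $\Psi_i$.

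The main obstacle is exactly the tension between the two roles played by $m = |\mu|$: it is simultaneously the prefactor and the argument in the Goulden--Jackson--Vakil shape, and the order $r$ of the twist, so the moduli space $\oM_{g,n}(B\Z_m)$ and the Chiodo class genuinely depend on $\mu$, whereas the conjecture demands a universal space and universal classes. Overcoming this requires showing that, after pushforward to $\oM_{g,n}$, the $r$-dependence is polynomial and conspires to be carried entirely by the explicit factors $m$ and $(1 - \mu_i \Psi_i)^{-1}$, so that $\Lambda_{2k}$ and $\Psi_i$ can be taken independent of $\mu$. Controlling this polynomiality, and matching the degree $4g-3+n$ and the uniform parity of monomials predicted by Goulden, Jackson and Vakil against the degrees in which the Chiodo class is supported, which is what forces the even indexing $\Lambda_{2k} \in H^{4k}$, is where the real work lies.
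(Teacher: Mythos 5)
Your proposal does not prove the statement, and in fact the statement is a conjecture that the paper itself does not claim to prove: the paper explicitly offers only a \emph{partial} resolution via \cref{thm:main}. Your first three steps --- identifying $h^{\textnormal{one-part}}_{g;\mu}$ with the $q$-orbifold Hurwitz number at $q = d = |\mu|$, specialising the Johnson--Pandharipande--Tseng formula (whose combinatorial prefactors trivialise because $\mu_i \leq d$), and rewriting via Chiodo classes on moduli of spin curves and their pushforward to $\oM_{g,n}$ --- reproduce exactly the paper's proof of \cref{thm:main}, and they are sound. The gap is concentrated in your final step, which is precisely where the content of \cref{con:GJV} lies beyond \cref{thm:main}.

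Concretely, you propose to ``invoke the polynomiality in the order $r$ of Chiodo-class intersection numbers, evaluate at $r = m$, and repackage the output as a fixed combination of tautological classes.'' The only known polynomiality result of this type (Janda--Pandharipande--Pixton--Zvonkine) states that the degree-$k$ part of $r^{2k-2g+1}\epsilon_*\Ch_{g,n}(r,s;a_1,\ldots,a_n)$ is polynomial in $r$ for $r$ sufficiently large \emph{relative to the fixed parameters} $a_1,\ldots,a_n$; in your situation $r = |\mu|$ and $a_i = -\mu_i$ are intrinsically coupled, so this regime never applies --- the paper flags exactly this as an open obstruction (Property E in \cref{subsec:secondary}). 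Moreover, even granting some polynomiality, the conjecture demands classes $\Lambda_{2k} \in H^{4k}$ forming a finite alternating sum $\sum_{k=0}^g (-1)^k \Lambda_{2k}$, independent of $\mu$, on a fixed space of (virtual) dimension $4g-3+n$. Chiodo classes do not in general vanish in degree above $g$ (unlike Hodge classes); the class $\epsilon_*\Ch^{[d]}_{g,n}(d,d;-\mu_1,\ldots,-\mu_n)$ depends on $\mu$ in a way that remains unresolved; and the evenness structure (the fact that $h^{\textnormal{one-part}}$ is a polynomial in $\mu_1^2,\ldots,\mu_n^2$, which is what forces $\Lambda_{2k} \in H^{4k}$) is, as the paper observes in \cref{sec:furtherremarks}, not self-evident from any of these formulas: under $\mu_i \mapsto -\mu_i$ the psi-coefficients in Chiodo's formula transform by $B_{k+1}(a_i/r) \mapsto (-1)^{k+1}B_{k+1}(a_i/r)$, introducing sign changes that must cancel against the other summands in a non-trivial and currently unexplained way. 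You correctly name this obstacle in your closing paragraph, but you offer no mechanism to overcome it; as written, your argument establishes (a version of) \cref{thm:main}, not the conjecture.
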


The space $\oPic_{g,n}$ and its classes $\Lambda_{2k}$ and $\Psi_i$ are yet to be defined, but expected to satisfy several natural properties, which are listed in~\cref{sec:properties}. For example, the polynomial structure of one-part double Hurwitz numbers suggests that $\oPic_{g,n}$ carries a virtual fundamental class of complex dimension $4g-3+n$. Perhaps the most speculative of the aforementioned properties is the fact that $\oPic_{g,n}$ should be a compactification of the universal Picard variety $\Pic_{g,n}$, the moduli space that parametrises genus $g$ curves with $n$ marked points equipped with a degree 0 line bundle. One would then expect that forgetful morphisms, $\psi$-classes and $\lambda$-classes on moduli spaces of stable curves $\overline{\mathcal M}_{g,n}$ would have natural analogues that exhibit similar behaviour on $\oPic_{g,n}$.

Our main result is~\cref{thm:main}, which comprises three related formulas that serve as candidates for an ELSV formula for one-part double Hurwitz numbers. Respectively, they express $h_{g; \mu}^{\textnormal{one-part}}$ in terms of
\begin{itemize}
\item Hodge classes on moduli spaces of stable maps to classifying spaces;
\item Chiodo classes on moduli spaces of spin curves; and
\item tautological classes on moduli spaces of stable curves.
\end{itemize}
The proof of~\cref{thm:main} is accomplished by specialising known results from the literature, particularly the work of Johnson, Pandharipande and Tseng on abelian Hurwitz--Hodge integrals~\cite{joh-pan-tse}, as well as the work of Popolitov, Shadrin, Zvonkine and the second author on Chiodo classes~\cite{LPSZ}.

The current work provides a partial resolution to~\cref{con:GJV}. However, let us state from the outset that none of our formulas satisfies all of the desired properties enunciated by Goulden, Jackson and Vakil. In~\cref{sec:properties}, we discuss the relative merits of these formulas against these properties. For example, one of our formulas expresses $h_{g; \mu_1, \ldots, \mu_n}^{\textnormal{one-part}}$ as an integral over $\overline{\mathcal M}_{g,n+g}$. Thus, the moduli space possesses the virtuous features of having the expected dimension $4g-3+n$ and not depending on the partition $\mu$. On the other hand, it is not explicitly a moduli space of line bundles, as the universal Picard variety should be.

Despite the previous remarks, it is conceivable that~\cref{thm:main} may lead to a more satisfying resolution of the Goulden--Jackson--Vakil conjecture. Furthermore, our results have direct applications to intersection theory on moduli spaces of curves. By invoking the obvious symmetry for double Hurwitz numbers $h_{g; \mu; \nu} = h_{g; \nu; \mu}$, which exchanges ramification profiles over $0$ and $\infty$, we are able to compare instances of the original ELSV formula for single Hurwitz numbers with instances of our formula for one-part double Hurwitz numbers. This leads to a non-trivial relation between intersection numbers on $\overline{\mathcal M}_{g,1}$ and $\overline{\mathcal M}_{g,d}$, stated as~\cref{thm:comparison}. It is natural to wonder whether such a result may be the shadow of a richer relation at the level of the cohomology of $\overline{\mathcal M}_{g,n}$.

It is known from the work of Goulden, Jackson and Vakil that one can in fact write
\[
h_{g; \mu_1, \ldots, \mu_n}^{\textnormal{one-part}} = (\mu_1 + \cdots + \mu_n)^{2g-2+n} \, P_{g,n}(\mu_1^2, \ldots, \mu_n^2),
\]
for some symmetric polynomial $P_{g,n}$ of degree $g$~\cite{gou-jac-vak}. We make the observation that such structure is not self-evident from any of our ELSV formulas for one-part double Hurwitz numbers. So this polynomiality then suggests some further structure underlying the Chiodo classes that arise in our ELSV formulas and we leave the implications to future work.

One may consider analogues of the Hurwitz numbers above with the simple branch points replaced by branch points with ramification profile $(r+1, 1, 1, \ldots, 1)$ for some fixed positive integer $r$. The work of Okounkov and Pandharipande on the Gromov--Witten/Hurwitz correspondence suggests that it is geometrically natural to compactify such a count using so-called completed cycles and the resulting enumeration yields spin Hurwitz numbers~\cite{oko-pan}. Zvonkine conjectured an ELSV formula in the spin setting~\cite{zvo}, and this conjecture was later generalised further to spin orbifold Hurwitz numbers~\cite{KLPS}. These conjectures were ultimately resolved in a series of five papers involving the work of Borot, Dunin-Barkowski, Kramer, Popolitov, Shadrin, Spitz, Zvonkine and the second author~\cite{SSZ2, LPSZ, KLPS, BKLPS, DKPS}. This allows us to produce spin orbifold analogues of our results, with which we conclude the paper.

The structure of the paper is as follows.
\begin{itemize}
\item In~\cref{sec:background}, we briefly introduce the moduli spaces and associated cohomology classes that appear in our main result. The exposition is necessarily concise, serving only to recall the relevant definitions and notations. We include references to the literature for the reader seeking a more thorough treatment.

\item In~\cref{sec:elsv}, we state and prove the main result of the paper ---~\cref{thm:main} --- which comprises three candidates for an ELSV formula for one-part double Hurwitz numbers.

\item In~\cref{sec:properties}, we discuss the relative merits of our formulas against the list of properties sought by Goulden, Jackson and Vakil from an ELSV formula for one-part double Hurwitz numbers. We conclude the section by observing that the polynomiality of one-part double Hurwitz numbers suggests some further structure underlying the Chiodo classes.

\item In~\cref{sec:comparison}, we use the symmetry that exchanges ramification profiles over $0$ and $\infty$ to compare our results with the original ELSV formula. This produces new relations between tautological intersection numbers on moduli spaces of curves. The aforementioned argument is then generalised to the setting of orbifold Hurwitz numbers, by comparing with the Johnson--Pandhandripande--Tseng formula.

\item In~\cref{sec:verification}, we perform some initial calculations that verify the main relation of the previous section in some low genus and low degree cases.

\item In~\cref{sec:spin}, we present a generalisation of our main result to the spin setting, which in turn leads to new relations between tautological intersection numbers on moduli spaces of stable curves. Since the ideas involved are essentially those contained in previous sections, we keep the exposition brief and focus on presenting the relevant results without proof.
\end{itemize}

\section{Background} \label{sec:background}

In this section, we briefly introduce the algebro-geometric objects and corresponding notations required to state our main result. As usual, one can choose to work in terms of cohomology classes or their corresponding Chow classes instead. We have opted for the former and all cohomology is taken with rational coefficients.

\subsection{Tautological classes on moduli spaces of stable curves} \label{subsec:tautological}

Let ${\mathcal M}_{g,n}$ be the moduli space of non-singular algebraic curves $(C; p_1, \ldots, p_n)$ of genus $g$, with $n$ distinct marked points $p_1, \ldots, p_n \in C$. The Deligne--Mumford compactification $\overline{\mathcal M}_{g,n}$ is the moduli space of stable algebraic curves $(C; p_1, \ldots, p_n)$ of genus $g$, with $n$ distinct non-singular marked points $p_1, \ldots, p_n \in C$. A marked algebraic curve is stable if all of its singularities are nodes and there are finitely many automorphisms that preserve the marked points.

The forgetful morphism $\pi: \overline{\mathcal M}_{g,n+1} \to \overline{\mathcal M}_{g,n}$ forgets the point marked $n+1$ and stabilises the curve, if necessary. There is a natural identification of the universal curve $\overline{\mathcal C}_{g,n}$ with $\overline{\mathcal M}_{g,n+1}$, which allows us to define sections $\sigma_1, \ldots, \sigma_n: \overline{\mathcal M}_{g,n} \to \overline{\mathcal M}_{g,n+1}$ corresponding to the marked points of the curve. The relative dualising sheaf ${\mathcal L} = {\mathcal K}_{\overline{\mathcal M}_{g,n+1}} \otimes \pi^* {\mathcal K}_{\overline{\mathcal M}_{g,n}}^{-1}$ extends to the compactification the vertical cotangent bundle on ${\mathcal M}_{g,n+1} \cong {\mathcal C}_{g,n}$, whose fibre over $(C, p)$ is the cotangent line $T^*_p C$.
\begin{itemize}
\item The $\psi$-classes are given by $\psi_i = c_1(\sigma_i^* {\mathcal L}) \in H^2(\overline{\mathcal M}_{g,n})$ for $1 \leq i \leq n$.

\item The $\kappa$-classes are given by $\kappa_m = \pi_*(c_1({\mathcal E})^{m+1}) \in H^{2m}(\overline{\mathcal M}_{g,n})$ for $m \geq 0$, where ${\mathcal E} = {\mathcal L}(\sum \sigma_i(\overline{\mathcal M}_{g,n}))$.

\item The $\lambda$-classes are given by $\lambda_k = c_k(\Lambda) \in H^{2k}(\overline{\mathcal M}_{g,n})$ for $k \geq 0$. Here, $\Lambda = \pi_*({\mathcal L})$ denotes the Hodge bundle, whose fibre over $[C] \in {\mathcal M}_{g,n}$ is the $g$-dimensional vector space of holomorphic 1-forms on $C$.
\end{itemize}

The study of the cohomology ring $H^*(\overline{\mathcal M}_{g,n})$ has received a great deal of attention although an explicit description is widely considered to be untractable at present. One can instead focus on the tautological rings $R^*(\overline{\mathcal M}_{g,n}) \subseteq H^*(\overline{\mathcal M}_{g,n})$, whose classes are geometrically natural in some sense. They are simultaneously defined for all $g$ and $n$ as the smallest system of $\mathbb{Q}$-algebras closed under pushforwards by the natural forgetful and gluing morphisms between moduli spaces of stable curves. All of the classes defined above live in the tautological ring $R^*(\overline{\mathcal M}_{g,n})$. For a more thorough introduction to moduli spaces of stable curves and their tautological rings, the reader is encouraged to consult the literature~\cite{har-mor}.

\subsection{Chiodo classes on moduli spaces of spin curves} \label{subsec:chiodo}

For $2g - 2 +n > 0$, consider a non-singular marked curve $(C; p_1, \ldots, p_n) \in \cM_{g,n}$ and let $\Klog = \omega_C(\sum p_i)$ be its log canonical bundle. Fix a positive integer $r$, and let $1 \leq s \leq r$ and $1 \leq a_1, \ldots, a_n \leq r$ be integers satisfying the equation
\[
a_1 + a_2 + \cdots + a_n \equiv (2g-2+n)s \pmod{r}.
\]
This condition guarantees the existence of a line bundle over $C$ whose $r$th tensor power is isomorphic to $\Klog^{\otimes s}(-\sum a_i p_i)$. Varying the underlying curve and the choice of such an $r$th tensor root yields a moduli space with a natural compactification $\oM_{g;a_1, \ldots, a_n}^{r,s}$ that was independently constructed by Chiodo~\cite{chi1} and Jarvis~\cite{jar}. These works also include constructions of the universal curve $\pi : \overline{\mathcal C}_{g;a_1, \ldots, a_n}^{r,s} \to \oM_{g; a_1, \ldots, a_n}^{r,s}$ and the universal $r$th root $\cL \to \overline{\mathcal C}_{g; a_1, \ldots, a_n}^{r,s}$.

One can define psi-classes and kappa-classes in complete analogy with the case of moduli spaces of stable curves, as described previously. Chiodo's formula then states that the Chern characters of the derived pushforward $\mathrm{ch}_k(R^*\pi_*{\mathcal L})$ are given by
\begin{align} \label{eq:chiodoformula}
\mathrm{ch}_k(r, s; a_1, \ldots, a_n) :={}& \frac{B_{k+1}(s/r)}{(k+1)!} \kappa_k - \sum_{i=1}^n \frac{B_{k+1}(a_i/r)}{(k+1)!} \psi_i^k \notag \\
&+ \frac{r}{2} \sum_{a=0}^{r-1} \frac{B_{k+1}(a/r)}{(k+1)!} {j_a}_* \frac{(\psi')^k + (-1)^{k-1}(\psi'')^k}{\psi'+\psi''}. 
\end{align}
Here, $B_m(x)$ denotes the Bernoulli polynomial, $j_a$ is the boundary morphism that represents the boundary divisor with multiplicity index $a$ at one of the two branches of the corresponding node, and $\psi',\psi''$ are the $\psi$-classes at the two branches of the node~\cite{chi2}.

We will commonly use the class in $H^*(\oM_{g; a_1, \ldots, a_n}^{r,s})$ defined by
\begin{align} \label{eq:chiodo}
\Ch_{g,n}(r, s; a_1, \ldots, a_n) :={}& c(-R^*\pi_*\cL) \notag \\
={}& \exp \bigg[ \sum_{k=1}^\infty (-1)^k (k-1)! \, \ch_k(r, s; a_1, \ldots, a_n) \bigg].
\end{align}
More generally, we also use the notation
\[
\Ch_{g,n}^{[x]}(r, s; a_1, \ldots, a_n) := \exp \bigg[ \sum_{k=1}^\infty (-x)^k (k-1)! \, \ch_k(r, s; a_1, \ldots, a_n) \bigg].
\]
It is natural and convenient to consider $a_1, \ldots, a_n$ modulo $r$ and we will do so throughout. This allows us, for example, to write statements such as $\Ch_{g,n}(r, s; -1, \ldots, -1) \in H^*(\oM_{g; -1, \ldots, -1}^{r,s})$.

There is a natural forgetful morphism
\[
\epsilon: \overline{\mathcal{M}}^{r,s}_{g; a_1, \ldots, a_n} \to \overline{\mathcal{M}}_{g,n},
\]
which forgets the line bundle, otherwise known as the spin structure. It is an $r^{2g}$-sheeted cover, unramified away from the boundary; however, $\epsilon$ in fact has degree $r^{2g-1}$ due to the $\Z_r$ symmetry of each $r$th root generated by a morphism multiplying by a primitive root of unity in the fibres. This forgetful morphism allows us to consider the pushforward of the Chiodo classes to moduli spaces of stable curves.

\subsection{Hodge classes on moduli spaces of stable maps to classifying spaces} \label{subsec:hodge}

For $G$ a finite group, let $\overline{\mathcal M}_{g; \gamma}({\mathcal B}G)$ be the moduli stack of stable maps from a genus $g$ marked curve $(C; p_1, \ldots, p_n)$ to the classifying space ${\mathcal B}G$, with monodromy data $\gamma = (\gamma_1, \ldots, \gamma_n)$, where $\gamma_i$ is the monodromy around the marked point $p_i$.

There is a natural map $\epsilon: \overline{\mathcal M}_{g; \gamma}({\mathcal B}G) \to \overline{\mathcal M}_{g,n}$ that sends a stable map to the stabilisation of its domain curve. One can thus define psi-classes via the pullback construction
\[
\overline{\psi}_i = \epsilon^*(\psi_i) \in H^2(\overline{\mathcal M}_{g; \gamma}({\mathcal B}G)), \qquad \text{for } 1 \leq i \leq n.
\]

In the following, we are only interested in the case $G = \mathbb{Z}_r$ for some positive integer $r$, in which case the monodromy data is given by a tuple $(a_1, \ldots, a_n)$ of integers that we consider modulo $r$. The Hodge bundle $\overline{\Lambda} \to \overline{\mathcal M}_{g; \gamma}({\mathcal B}\mathbb{Z}_r)$ associates to the map $f: [D / \mathbb{Z}_r] \to {\mathcal B}\mathbb{Z}_r$ the $\rho$-summand of the $\mathbb{Z}_r$-representation $H^0(D, \omega_D)$, where $\rho: \mathbb{Z}_r \to \mathbb{C}^*$ is the representation defined by $1 \mapsto \exp(\frac{2\pi i}{r})$. We then define the Hodge classes as
\[
\overline{\lambda}_k = c_k(\overline{\Lambda}) \in H^{2k}(\overline{\mathcal M}_{g; \gamma}({\mathcal B}\mathbb{Z}_r)), \qquad \text{for } k \geq 0.
\]

\section{ELSV formulas for one-part double Hurwitz numbers} \label{sec:elsv}

We are now in a position to state and prove our main result. The proof relies heavily on two results from the literature: the first is an ELSV formula for orbifold Hurwitz numbers using Hodge integrals on moduli spaces of stable maps, proved by Johnson, Pandharipande and Tseng~\cite{joh-pan-tse}; the second is an alternative ELSV formula for orbifold Hurwitz numbers using Chiodo classes, proved by Popolitov, Shadrin, Zvonkine and the second author~\cite{LPSZ}.

\begin{theorem}[ELSV formulas for one-part double Hurwitz numbers] \label{thm:main}
For integers $g \geq 0$ and $n \geq 1$ with $(g,n) \neq (0,1)$ or $(0,2)$, the one-part double Hurwitz numbers satisfy the following formulas, where $d = \mu_1 + \cdots + \mu_n$.
\begin{itemize}
\item Hodge classes on moduli spaces of stable maps to the classifying space $\mathcal{B} \mathbb{Z}_d$
\begin{equation} \label{eq:ELSVdJPT}
h_{g; \mu_1, \ldots, \mu_n}^{\textnormal{one-part}} = d^{2-g} \int_{\overline{\mathcal{M}}_{g; -\mu_1, \ldots, -\mu_n}(\mathcal{B}\mathbb{Z}_d)} \frac{\sum_{k=0}^\infty(-d)^k \overline{\lambda}_k}{\prod_{i=1}^n (1 - \mu_i \bar{\psi}_i)}
\end{equation}

\item Chiodo classes on moduli spaces of spin curves
\begin{equation} \label{eq:ELSVdChiodo}
h_{g; \mu_1, \ldots, \mu_n}^{\textnormal{one-part}} = d^{2-g} \int_{\overline{\mathcal{M}}^{d,d}_{g,n; -\mu_1, \ldots, -\mu_n}} 
\frac{\Ch_{g,n}^{[d]}(d, d; -\mu_1, \ldots, -\mu_n)}{\prod_{i=1}^n (1 - \mu_i \psi_i)}
\end{equation}

\item Tautological classes on moduli spaces of stable curves
\begin{align}
h_{g; \mu_1, \ldots, \mu_n}^{\textnormal{one-part}} &= d^{2-g} \int_{\overline{\mathcal{M}}_{g,n}} 
\frac{\epsilon_* \Ch_{g,n}^{[d]}(d, d; -\mu_1, \ldots, -\mu_n)}{\prod_{i=1}^n (1 - \mu_i \psi_i)} \label{eq:ELSVdMgn} \\
&= d^{2-g} \int_{\overline{\mathcal{M}}_{g,n+g}} \frac{\epsilon_* \Ch_{g,n + g}^{[d]}(d, d; -\mu_1, \ldots, -\mu_n, 0, \ldots, 0)}{\prod_{i=1}^n (1 - \mu_i \psi_i)} \, \mathfrak{c}_{g,n} \label{eq:ELSVdMgng}
\end{align}
\end{itemize}
This 
 last expression uses the class $\mathfrak{c}_{g,n} = \frac{(2g - 2 + n)!}{(3g - 3 + n)!} \, \psi_{n+1} \cdots \psi_{n+g} \in H^{2g}(\overline{\mathcal M}_{g,n+g})$. 
\end{theorem}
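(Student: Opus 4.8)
The plan is to treat one-part double Hurwitz numbers as a boundary case of orbifold Hurwitz numbers and then specialise the two existing ELSV-type formulas for the latter. The first observation is that $h^{\textnormal{one-part}}_{g;\mu}$ coincides with the $r$-orbifold Hurwitz number at $r = d = |\mu|$: the profile over $0$ is the single part $(d)$, which is complete ramification by one $d$-cycle, that is, the $d/r = 1$ instance of the orbifold problem (in particular $\ell_0 = 1$). Once the normalisations in \cref{eq:hurwitzweight} (the automorphism factor and the $(2g-2+\ell_0+\ell_\infty)!$ factor) are matched, the $r=d$ specialisation of the Johnson--Pandharipande--Tseng formula~\cite{joh-pan-tse} yields \cref{eq:ELSVdJPT} immediately, with monodromy data $-\mu_i \bmod d$ and with the orbifold prefactor collapsing to $d^{2-g}$.

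For \cref{eq:ELSVdChiodo} I would invoke the Chiodo-class ELSV formula for orbifold Hurwitz numbers of~\cite{LPSZ}, again specialised to $r = s = d$. The equality of the right-hand sides of \cref{eq:ELSVdJPT,eq:ELSVdChiodo} is not a separate computation but the $r=d$ case of the comparison that underlies~\cite{LPSZ}: under the identification of $\oM_{g;-\mu}(\mathcal{B}\Z_d)$ with the moduli space of $d$-th roots $\oM^{d,d}_{g,n;-\mu}$, the twisted Hodge class $\sum_k (-d)^k \overline{\lambda}_k$ is identified with $\Ch^{[d]}_{g,n}(d,d;-\mu_1,\ldots,-\mu_n)$, both expressing the total Chern class of minus the derived pushforward of the universal root weighted by $d$ in each degree. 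I would therefore import this identification from the literature rather than reprove it.

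The third formula \cref{eq:ELSVdMgn} follows from \cref{eq:ELSVdChiodo} by pushing forward along the morphism $\epsilon\colon \oM^{d,d}_{g,n;-\mu}\to\oM_{g,n}$ that forgets the spin structure. The classes $\psi_i$ in the denominator are pulled back along $\epsilon$, so the projection formula carries $\prod_i (1-\mu_i\psi_i)^{-1}$ through the pushforward and replaces $\Ch^{[d]}_{g,n}$ by $\epsilon_*\Ch^{[d]}_{g,n}$, leaving the prefactor $d^{2-g}$ unchanged.

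The genuinely delicate step is \cref{eq:ELSVdMgng}. Here I would compare the two sides through the forgetful morphism $p\colon\oM_{g,n+g}\to\oM_{g,n}$ dropping the last $g$ marked points, using the insertion $\mathfrak{c}_{g,n}=\frac{(2g-2+n)!}{(3g-3+n)!}\,\psi_{n+1}\cdots\psi_{n+g}$. Two ingredients combine. First, a purely combinatorial pushforward: using $\psi_{n+j}\cdot D_{i,n+j}=0$ to collapse the boundary corrections, repeated application of $\pi_*\psi = \kappa_0$ gives $p_*(\psi_{n+1}\cdots\psi_{n+g}) = (2g-2+n)(2g-1+n)\cdots(3g-3+n)$, together with the fact that the spectator factor $\prod_i(1-\mu_i\psi_i)^{-1}$ may be replaced by its $p$-pullback in the presence of $\psi_{n+1}\cdots\psi_{n+g}$. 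Second, and this is the crux, the relation between $\Ch^{[d]}_{g,n+g}(d,d;-\mu,0,\ldots,0)$ and the pullback $p^*\Ch^{[d]}_{g,n}(d,d;-\mu)$. The subtlety is that a marking with index $a\equiv 0$ is not pullback-compatible: because $s=d$, the universal roots satisfy $\cL_{n+1}\cong p^*\cL_n\otimes\mathcal{O}(p_{n+1})$ rather than $\cL_{n+1}\cong p^*\cL_n$, so the $a\equiv 0$ insertion genuinely twists the Chiodo class. I expect this twist to contribute a compensating factor that, together with the normalisation $\frac{(2g-2+n)!}{(3g-3+n)!}$ in $\mathfrak{c}_{g,n}$, cancels the combinatorial constant and removes the naive factor of $2g-2+n$ that a clean pullback would produce; establishing this cancellation exactly is the main obstacle. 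A viable route is to compute the twist correction from the short exact sequence for $\mathcal{O}(p_{n+1})$ along each added section and to verify that it trims precisely one factor from the pushforward constant, leaving \cref{eq:ELSVdMgng} equal to \cref{eq:ELSVdMgn}.
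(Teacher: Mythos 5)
Your handling of \cref{eq:ELSVdJPT,eq:ELSVdChiodo,eq:ELSVdMgn} is correct and coincides with the paper's proof: both specialise the Johnson--Pandharipande--Tseng formula~\cite{joh-pan-tse} and the Chiodo-class formula of~\cite{LPSZ} at $q=d$, where the combinatorial prefactors become trivial, and pass between the spin moduli space and $\oM_{g,n}$ via the projection formula (the degree-wise rescaling by $d$, which converts $\prod_i(1-\tfrac{\mu_i}{d}\psi_i)$ into $\prod_i(1-\mu_i\psi_i)$ and $\Ch$ into $\Ch^{[d]}$, is what you call ``weighting by $d$ in each degree'').

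The genuine gap is in \cref{eq:ELSVdMgng}, and it is not merely that you leave your ``crux'' unestablished: the mechanism you hope for does not exist. The twist $\cL_{n+1}\cong p^*\cL_n\otimes\mathcal{O}(p_{n+1})$ is an artifact of the representative $a_{n+1}=0$. Since indices are taken modulo $r=d$ and here $s=d$, you may equally take $a_{n+1}=d=s$, and then the universal root is an honest pullback: the factor $\mathcal{O}(s\,p_{n+1})$ by which $\Klog^{\otimes s}$ differs from its pullback is exactly cancelled by $\mathcal{O}(-a_{n+1}p_{n+1})$. The two representatives define the \emph{same} Chiodo class, because \cref{eq:chiodo} involves only $\ch_k$ with $k\geq 1$, and $B_{k+1}(0)=B_{k+1}(1)$ for $k\geq 1$. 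In other words, the insertion $a\equiv 0$ is precisely the flat unit of the Chiodo cohomological field theory (the property from~\cite{LPSZ} quoted in \cref{subsec:primary}), so $\epsilon_*\Ch^{[d]}_{g,n+g}(d,d;-\mu_1,\ldots,-\mu_n,0,\ldots,0)$ \emph{is} a clean pullback, and no compensating factor can arise from it. This is exactly what the paper's proof exploits, packaged as the dilaton equation for Chiodo classes imported from~\cite{do-lei-nor}: removing one $\psi$-decorated unit insertion over a base with $m$ marked points produces the factor $2g-2+m$, and $g$ applications produce $(2g-2+n)(2g-1+n)\cdots(3g-3+n)$ --- the same constant as your ``clean pullback'' computation.

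The consequence is that your (correct) pushforward computation, combined with the flat-unit property, shows that the right side of \cref{eq:ELSVdMgng} equals $\tfrac{(2g-2+n)!}{(3g-3+n)!}\cdot\tfrac{(3g-3+n)!}{(2g-3+n)!}=2g-2+n$ times the right side of \cref{eq:ELSVdMgn}. So the leftover factor you were trying to cancel is real, but it must be absorbed by the normalising constant rather than by a twist: the prefactor consistent with the dilaton argument (i.e.\ with the paper's own proof) is $\tfrac{(2g-3+n)!}{(3g-3+n)!}$ rather than the printed $\tfrac{(2g-2+n)!}{(3g-3+n)!}$ (test $g=1$: a single dilaton application produces the factor $n$, so the coefficient of $\psi_{n+1}$ must be $1/n$, whereas the printed $\mathfrak{c}_{1,n}$ has coefficient $1$). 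As proposed, the final step of your argument would fail; replacing the speculative twist-correction by the flat-unit/dilaton argument, and adjusting the constant accordingly, completes it.
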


\begin{proof}
We begin with the notion of orbifold Hurwitz numbers $h_{g;\mu}^{q\textnormal{-orbifold}}$, which enumerate connected genus $g$ branched covers of $\mathbb{CP}^1$ with ramification profile $\mu$ over $\infty$ and ramification profile $(q, q, \ldots, q)$ over 0, such that all other ramification is simple and occurs over prescribed points of $\mathbb{C}^*$. We take the weight attached to such a branched cover to be precisely as in~\cref{eq:hurwitzweight}.

Johnson, Pandharipande and Tseng prove the following ELSV formula for orbifold Hurwitz numbers, expressing them as intersection numbers on moduli spaces of stable maps to classifying spaces~\cite{joh-pan-tse}.
\begin{equation} \label{eq:JPT}
h_{g;\mu_1, \ldots, \mu_n}^{q\textnormal{-orbifold}} = q^{1-g+|\mu|/q} \prod_{i=1}^n \frac{(\mu_i/q)^{\lfloor \mu_i / q \rfloor}}{\lfloor \mu_i / q \rfloor !} \int_{\overline{\mathcal M}_{g; -\mu_1, \ldots, -\mu_n}({\mathcal B}\mathbb{Z}_q)} \frac{\sum_{k=0}^\infty (-q)^k \overline{\lambda}_k}{\prod_{i=1}^n (1-\mu_i \overline{\psi}_i)}
\end{equation}
Now we simply make the observation that for $d = \mu_1 + \cdots + \mu_n$, we have $h_{g; \mu}^{\textnormal{one-part}} = h_{g;\mu}^{d\textnormal{-orbifold}}$. Specialising the above formula to the case $q = d$ and using the fact that $\frac{(\mu_i/q)^{\lfloor \mu_i / q \rfloor}}{\lfloor \mu_i / q \rfloor !} = 1$ for $\mu_i \leq q$ immediately yields~\cref{eq:ELSVdJPT}.

The following ELSV formula for orbifold Hurwitz numbers is implicit in the work of Popolitov, Shadrin, Zvonkine and the second author, by comparing Theorems 4.5 and 5.1 from~\cite{LPSZ} with $r = s = q$.
\begin{equation} \label{eq:JPTalternative}
h_{g;\mu_1, \ldots, \mu_n}^{q\textnormal{-orbifold}} = q^{2g-2+n+|\mu|/q} \prod_{i=1}^n \frac{(\mu_i/q)^{\lfloor \mu_i / q \rfloor}}{\lfloor \mu_i / q \rfloor !} \int_{\overline{\mathcal M}_{g,n}} \frac{\epsilon_* \Ch_{g,n}(q, q; -\mu_1, \ldots, -\mu_n)}{\prod_{i=1}^n (1 - \frac{\mu_i}{q} \psi_i)} 
\end{equation}
Again, we specialise to the case $q = d$, which removes the product of combinatorial factors preceding the integral.
\[
h_{g; \mu_1, \ldots, \mu_n}^{\textnormal{one-part}} = h_{g;\mu_1, \ldots, \mu_n}^{d\textnormal{-orbifold}} = d^{2g-1+n} \int_{\overline{\mathcal M}_{g,n}} \frac{\epsilon_* \Ch_{g,n}(d, d; -\mu_1, \ldots, -\mu_n)}{\prod_{i=1}^n (1 - \frac{\mu_i}{d} \psi_i)}
\]
One obtains~\cref{eq:ELSVdMgn} from this by multiplying each class of cohomological degree $2k$ in the integrand by~$d^k$. We compensate with a global factor of $d^{-(3g-3+n)}$.

As Theorem 4.5 of~\cite{LPSZ} is obtained via the pushforward of Chiodo classes from the moduli space of spin curves, we immediately have~\cref{eq:ELSVdChiodo}.

Finally, to prove~\cref{eq:ELSVdMgng}, we invoke the dilaton equation for orbifold Hurwitz numbers~\cite[Theorem 20]{do-lei-nor}. One can use~\cref{eq:JPTalternative} to express this in the language of Chiodo classes as follows.
\begin{multline}
 \int_{\overline{\mathcal M}_{g,n+1}} \Ch_{g,n+1}^{[d]}(d, d; -\mu_1, \ldots, -\mu_n, 0) \cdot \psi_1^{a_1} \cdots \psi_n^{a_n} \psi_{n+1} \\
 = (2g-2+n) \int_{\overline{\mathcal M}_{g,n}} \Ch_{g,n}^{[d]}(d, d; -\mu_1, \ldots, -\mu_n) \cdot \psi_1^{a_1} \cdots \psi_n^{a_n}
\end{multline}
Applying this dilaton equation $g$ times demonstrates the equivalence of~\cref{eq:ELSVdMgn,eq:ELSVdMgng}, which completes the proof.
\end{proof}

\section{Properties of the formulas} \label{sec:properties}

In its original form, the Goulden--Jackson--Vakil conjecture ---~\cref{con:GJV} --- predicts an ELSV formula with a particular structure, but leaves some room for freedom. Rather than prescribing the exact geometric ingredients, it describes desirable properties that they are expected to satisfy. In this section, we analyse these properties in turn and discuss the extent to which our proposed ELSV formulas satisfy them. We conclude the section with some remarks on how~\cref{thm:main} suggests potential further structure underlying Chiodo classes.

\subsection{Primary properties} \label{subsec:primary}

Goulden, Jackson and Vakil predict an ELSV formula for one-part double Hurwitz numbers of the following form.
\[
h_{g; \mu_1, \ldots, \mu_n}^{\textnormal{one-part}} = (\mu_1 + \cdots + \mu_n) \int_{\oPic_{g,n}} \frac{\sum_{k=0}^g (-1)^k \Lambda_{2k}}{\prod_{i=1}^n (1 - \mu_i \Psi_i)}
\]
They furthermore posit the following four ``primary'' properties, which are essentially taken verbatim from their paper~\cite[Conjecture~3.5]{gou-jac-vak}.

{\bf Property 1.} {\em There is a moduli space $\oPic_{g,n}$, with a (possibly virtual) fundamental class $[ \oPic_{g,n} ]$ of dimension $4g-3+n$, and an open subset isomorphic to the Picard variety $\Pic_{g,n}$ of the universal curve over $\oM_{g,n}$ (where the two fundamental classes agree).}

{\bf Property 2.} {\em There is a forgetful morphism $\pi: \oPic_{g,n+1} \rightarrow \oPic_{g,n}$ (flat, of relative dimension 1), with $n$ sections~$\sigma_i$ giving Cartier divisors $\Delta_{i,n+1}$ ($1 \leq i \leq n$). Both morphisms behave well with respect to the fundamental class: $[\oPic_{g,n+1}] = \pi^*[\oPic_{g,n}]$, and $\Delta_{i,n+1} \cap \oPic_{g,n+1} \cong \oPic_{g,n}$ (with isomorphisms given by $\pi$ and $\sigma_i$), inducing $\Delta_{i,n+1} \cap [\oPic_{g,n+1}] \cong [\oPic_{g,n}]$.}

{\bf Property 3.} {\em There are $n$ line bundles, which over ${\mathcal M}_{g,n}$ correspond to the cotangent spaces of the first $n$ points on the curve (i.e. over ${\mathcal M}_{g,n}$ they are the pullbacks of the ``usual'' $\psi$-classes on ${\mathcal M}_{g,n}$). Denote their first Chern classes $\Psi_1, \ldots, \Psi_n$. They satisfy $\Psi_i = \pi^* \Psi_i + \Delta_{i,n+1}$ ($i \leq n$) on $\oPic_{g,n+1}$ (the latter $\Psi_i$ is on $\oPic_{g,n}$), and $\Psi_i \cdot \Delta_{i,n+1} = 0$.}

{\bf Property 4.} {\em There are Chow (or cohomology) classes $\Lambda_{2k}$ ($k= 0, 1, \ldots, g$) of codimension $2k$ on $\oPic_{g,n}$, which are pulled back from $\oPic_{g,1}$ (if $g>0$) or $\oPic_{0,3}$; $\Lambda_0 = 1$. The $\Lambda$-classes are the Chern classes of a rank $2g$ vector bundle isomorphic to its dual.}

Below, we briefly discuss the relative merits of the ELSV formulas obtained in~\cref{thm:main} against the four properties above.

{\bf Hodge classes on moduli spaces of stable maps to the classifying space $\mathcal{B} \mathbb{Z}_d$ ---~\cref{eq:ELSVdJPT}.} The moduli space $\overline{\mathcal{M}}_{g; -\mu_1, \ldots, -\mu_n}(\mathcal{B}\mathbb{Z}_d)$ has dimension $3g-3+n$ rather than $4g-3+n$. It can be equivalently described as a moduli space of principal $\mathbb{Z}_d$-bundles over stable curves~\cite{joh-pan-tse}. This makes some thematic connection with the universal Picard variety, which is a moduli space of line bundles over stable curves. Goulden, Jackson and Vakil predict a fixed space $\oPic_{g,n}$ from which all one-part double Hurwitz numbers $h_{g; \mu_1, \ldots, \mu_n}^{\textnormal{one-part}}$ with fixed $g$ and $n$ can be calculated. On the other hand,~\cref{eq:ELSVdJPT} uses a moduli space that depends on $g$ and the tuple $(\mu_1, \ldots, \mu_n)$. As a result, there is no obvious natural forgetful morphism that removes a marked point, even though there are natural psi-classes and lambda-classes. Observe that the classes $\overline{\lambda}_0, \overline{\lambda}_1, \overline{\lambda}_2, \ldots$ play the role of the Hodge classes in~\cref{eq:JPT}, the Johnson--Pandharipande--Tseng formula for orbifold Hurwitz numbers.

{\bf Chiodo classes on moduli spaces of spin curves ---~\cref{eq:ELSVdChiodo}.} The moduli space $\overline{\mathcal{M}}^{d,d}_{g,n; -\mu_1, \ldots, -\mu_n}$ has dimension $3g-3+n$ rather than $4g-3+n$. It shares some commonality with the Picard variety, since it is naturally a moduli space of line bundles over stable curves. Again,~\cref{eq:ELSVdChiodo} uses a moduli space that depends on $g$ and the tuple $(\mu_1, \ldots, \mu_n)$. As a result, there is no obvious natural forgetful morphism that removes a marked point, even though there are natural psi-classes. The Chiodo classes have proved to be important in various contexts, such as in the spin-ELSV formula~\cite{zvo,LPSZ} and the formula for the double ramification cycle~\cite{JPPZ}. In the former case, the Chiodo classes naturally take on an analogous role to the Hodge bundle in the original ELSV formula of~\cref{eq:ELSV}.

{\bf Tautological classes on moduli spaces of stable curves ---~\cref{eq:ELSVdMgn}.} The moduli space $\oM_{g,n}$ has dimension $3g-3+n$ rather than $4g-3+n$ and does not have a natural description as a moduli space of bundles on stable curves. Given that the Goulden--Jackson--Vakil conjecture is modelled on the ELSV formula, which also uses $\oM_{g,n}$, many of the remaining properties are satisfied. Namely, we have a forgetful morphism $\pi: \overline{\mathcal M}_{g,n+1} \to \overline{\mathcal M}_{g,n}$, natural sections $\sigma_i$ with associated Cartier divisors, and psi-classes, with all these geometric constructions behaving well with respect to each other.

Property 4 asks for the analogues of the Hodge classes to arise as Chern classes of vector bundles and to exhibit nice behaviour under pullback. In this case, the Chiodo classes are indeed defined as Chern classes, but of the virtual vector bundle $ - R^0 \pi_* \mathcal{L} + R^1 \pi_* \mathcal{L}$. When one of these two terms vanishes, one obtains an actual vector bundle whose rank can be computed by the Riemann-Roch formula. The Chiodo classes $\epsilon_*\Ch_{g,n}(r,s; a_1, \ldots, a_n)$ do behave well under pullback, since they are known to form a semi-simple cohomological field theory with flat unit for $0 \leq s \leq r$~\cite{LPSZ}.

The moduli space of curves $\overline{\mathcal M}_{g,n}$ seems the natural space for an ELSV formula, especially from the point of view of topological recursion. Indeed, a fundamental theorem of Eynard states that the quantities produced by topological recursion can be expressed as tautological intersection numbers on moduli spaces of curves~\cite{eynard}. One-part Hurwitz numbers may be generated via topological recursion in a somewhat non-standard way. Typically, all numbers of a given enumerative geometric problem are produced from the same spectral curve data used as input to the topological recursion. In this case, however, the numbers $h^{\textnormal{one-part}}_{g; \mu_1, \ldots, \mu_n}$ for fixed $|\mu|$ may be generated by the spectral curve for orbifold Hurwitz numbers given by the following data~\cite{bou-her-liu-mul,do-lei-nor}.
\[
\Sigma = \mathbb{C}\mathbb{P}^1 \qquad \qquad x(z) = z e^{-z^{|\mu|}} \qquad \qquad y(z) = z^{|\mu|} \qquad \qquad B(z_1, z_2) = \frac{\mathrm{d}z_1 \, \mathrm{d}z_2}{(z_1 - z_2)^2}
\]

Therefore, the totality of one-part double Hurwitz numbers is stored in an infinite discrete family of spectral curves, instead of a single one. In particular, two such numbers are produced by the same spectral curve if and only if they depend on partitions of equal size. In recent work with Borot, Karev and Moskovsky, we prove that double Hurwitz numbers in general are governed by the topological recursion, using a family of spectral curves that are coupled with formal variables~\cite{BDKLM}. A consequence of this work is an ELSV-type formula for double Hurwitz numbers and it would be interesting to consider the implications for one-part double Hurwitz numbers, although we do not pursue that line of thought here.

{\bf Tautological classes on moduli spaces of stable curves ---~\cref{eq:ELSVdMgn}.} The moduli space $\oM_{g,n+g}$ has the predicted dimension $4g-3+n$. Although it does not arise naturally as a moduli space of line bundles, we observe the following interplay between the uncompactified moduli space $\mathcal{M}_{g,n+g}$ and the universal Picard variety. Consider an algebraic curve $C_g$ of genus $g \geq 1$ and a fixed point $p \in C_g$. For each positive integer $m$, there is a map from the symmetric product $\Sigma^m C_g$ to the Jacobian $\Jac(C_g)$ defined by sending the tuple of points $(x_1, \ldots, x_m)$ to the divisor $\sum_{i=1}^m x_i - m\cdot p$. Note that this morphism is not canonical, since it depends on the choice of $p \in C_g$. For the particular case of $m=g$, any such map defines a birational equivalence between $\Sigma^g C_g$ and $\Jac(C_g)$. For our context, this argument should be adapted to curves with marked points, excluding the diagonal, but we do not intend to study this relation in detail. We simply remark that each element of $\mathcal{M}_{g,n+g}$ can be seen as an element $(C_g; p_1, \ldots, p_n)$ of $\mathcal{M}_{g,n}$ equipped with $g$ extra distinct points. Fixing an extra point $p \in C_g$, these $g$ points may be used to determine a degree 0 line bundle on $C_g$.

We admit that~\cref{eq:ELSVdMgn} may seem unnatural, as it is possible to equivalently express the formula simply in terms of $\oM_{g,n}$. Expressing it in terms of $\oM_{g,n+g}$, however, allows one to match the powers of $d$ --- and therefore the degree in $\mu_1, \ldots, \mu_n$ --- with equal degree cohomology classes, as one might expect from the original ELSV formula. One presumes that the desire for an ELSV formula on a space of dimension $4g - 3 + n$ in the Goulden--Jackson--Vakil conjecture is mainly motivated by such degree considerations.

\subsection{Secondary properties} \label{subsec:secondary}

We summarise and briefly address other expected properties of an ELSV formula for one-part double Hurwitz numbers, collected from discussions throughout the paper of Goulden, Jackson and Vakil~\cite{gou-jac-vak}.

{\bf Property A.} {\em The classes $\Lambda_{2k} \in H^{4k}(\oPic_{g,n}) $ are ``tautological''.} (See~\cite[paragraph after Conjecture~3.5]{gou-jac-vak}.)

To interpret this statement, one would need to define the word ``tautological'' for any moduli space under consideration that is not a moduli spaces of curves. The proposed ELSV formulas of~\cref{eq:ELSVdMgn,eq:ELSVdMgng} use pushforwards of Chiodo classes to $\oM_{g,n}$ or $\oM_{g,n+g}$, and these classes are evidently tautological by Chiodo's formula for the Chern characters $\ch_k(R^* \pi_* \mathcal{L})$, given in~\cref{eq:chiodoformula}.

{\bf Property B.} {\em There exists a morphism $\rho: \oPic_{g,n} \to \oM_{g,n}$ such that $\rho_{*}\Lambda_{2g} = \lambda_g$.} (See~\cite[Conjecture 3.13]{gou-jac-vak}.)

We do not have such a relation but instead find Chiodo classes appearing in place of the Hodge classes of the original ELSV formula. The Chiodo classes appear with various parameters, but one does recover the usual Hodge class $\lambda_g$ as the degree $g$ part of $\epsilon_* \Ch_{g,n}(1,1;1, \ldots, 1)$.

{\bf Property C.} {\em The definition of the moduli space $\oPic_{g,n}$ and the associated $\Psi$-classes and $\Lambda$-classes should make evident the fact that string and dilaton contraints govern the intersection numbers of its $\Lambda$-classes.}

The string and dilaton equations are stated in~\cite[Proposition~3.10]{gou-jac-vak}, though we restate them below using the language of the present paper. Observe that the evaluations on the left sides of the equations use the polynomiality of the one-part double Hurwitz numbers.
\begin{align*}
\left[ h_{g; \mu_1, \ldots, \mu_n, \mu_{n+1}}^{\textnormal{one-part}} \right]_{\mu_{n+1}=0} &= (\mu_1 + \cdots + \mu_n) \, h_{g; \mu_1, \ldots, \mu_n}^{\textnormal{one-part}} \\
\left[ \frac{\partial}{\partial \mu_{n+1}} h_{g; \mu_1, \ldots, \mu_n, \mu_{n+1}}^{\textnormal{one-part}} \right]_{\mu_{n+1}=0} &= (2g-2+n) \, h_{g; \mu_1, \ldots, \mu_n}^{\textnormal{one-part}}
\end{align*}
It is not clear at present how these relate to the formulas of~\cref{thm:main}.

{\bf Property D.} {\em The low genus verifications in $g=0$ and in $g=1$ correspond with the spaces $\oPic_{0,n} = \oM_{0,n}$ and $\oPic_{1,n} = \oM_{1, n+1}$.} (See~\cite[Proposition~3.11]{gou-jac-vak}.)

Our fourth ELSV formula in~\cref{eq:ELSVdMgng} does indeed match this result.

{\bf Property E.} {\em The formula should make evident the fact that $h_{g; \mu_1, \ldots, \mu_n}^{\textnormal{one-part}}$ is a polynomial in $\mu_1, \ldots, \mu_n$. Moreover, it should be possible to deduce that this polynomial only contains monomials of degrees $2g-2+n$ to $4g - 2 + n$.}

Deducing the polynomiality of one-part double Hurwitz numbers from~\cref{thm:main} is not straightforward, as the dependence of the Chiodo classes on its parameters still remains for the most part rather mysterious. The best available result in this direction is the following, due to Janda, Pandharipande, Pixton and Zvonkine~\cite[Proposition~5]{JPPZ}: the degree $d$ part of the class $r^{2d-2g+1} \epsilon_*\Ch_{g,n}(r, s; a_1, \ldots, a_n)$ is polynomial in $r$ for sufficiently large $r$. Here, ``sufficiently large'' is meant with respect to the parameters $a_1, \ldots, a_n$, whereas in our case, $r = |\mu|$ is intrinsically linked to the parameters $\mu_1, \ldots, \mu_n$. Moreover, a computational and conceptual difficulty arises from the fact that Chiodo classes do not in general vanish in degree higher than $g$, unlike Hodge classes. Nevertheless, one might conceivably be able to prove the desired polynomiality of one-part double Hurwitz numbers via a careful analysis of the stable graph expression for Chiodo classes by Janda, Pandharipande, Pixton and Zvonkine~\cite[Corollary 4]{JPPZ}.

\subsection{Further remarks and corollaries of the main theorem}\label{sec:furtherremarks}

The formulas of~\cref{thm:main} do not provide an immediate explanation for the polynomiality of one-part double Hurwitz numbers, unlike the the original ELSV formula for single Hurwitz numbers. However, in the case of one-part double Hurwitz numbers, there is a clear combinatorial explanation for the polynomiality~\cite{gou-jac-vak}. So rather than seeking a geometric explanation, we propose that one should instead study the implications of polynomiality for the geometry of moduli spaces.

As mentioned in~\cref{sec:introduction}, Goulden, Jackson and Vakil proved that one-part double Hurwitz numbers satisfy
\begin{equation}\label{eq:gen1}
h_{g; \mu_1, \ldots, \mu_n}^{\textnormal{one-part}} = (\mu_1 + \cdots + \mu_n)^{2g-2+n} \, P_{g,n}(\mu_1^2, \ldots, \mu_n^2),
\end{equation}
for some symmetric polynomial $P_{g,n}$ of degree $g$~\cite{gou-jac-vak}, explicitly obtained as
\begin{equation}\label{eq:gen2}
P_{g,n}(\mu_1^2, \ldots, \mu_n^2) = [t^{2g}]. \frac{\prod_{i=1}^n \SSS(t \mu_i)}{\SSS(t)}
\end{equation} 
for $\SSS(x) = \sinh(x/2)/(x/2)$. Note that both $\SSS(x)$ and $1/\SSS(x)$ are holomorphic even functions near $x=0$. Combined with~\cref{thm:main}, this implies that 
\[
|\mu| \int_{\overline{\mathcal{M}}_{g,n}} \frac{\epsilon_* \Ch_{g,n}(|\mu|, |\mu|; -\mu_1, \ldots, -\mu_n)}{\prod_{i=1}^n (1 - \frac{\mu_i}{|\mu|} \psi_i)}
\]
is a polynomial in $\mu_1, \ldots, \mu_n$ of degree $2g$ that is moreover invariant under the symmetries $\mu_i \leftrightarrow -\mu_i$ for all $1 \leq i \leq n$. Let us remark that this behaviour is a priori unexpected. As mentioned earlier, the class $\epsilon_*\Ch_{g,n}(r, s; a_1, \ldots, a_n)$ exhibits certain polynomial behaviour in $r$ for sufficiently large $r$~\cite{JPPZ}. In our case, $r = |\mu|$ is intrinsically linked to the parameters $\mu_1, \ldots, \mu_n$ and the polynomiality we observe is actually in the parameters $\mu_1, \ldots, \mu_n$. The invariance $\mu_i \leftrightarrow -\mu_i$ is perhaps even more surprising. For instance, even considering the Chiodo class with the parameter $r$ set sufficiently large, this symmetry changes $a_i$ into $r-a_i$ in the class parameters. This in turn transforms the coefficient of the psi-class terms in Chiodo's formula for $\mathrm{ch}_k(r, s; a_1, \ldots, a_n)$ via $B_{k+1}(\frac{a_i}{r}) \mapsto B_{k+1}(\frac{r - a_i}{r}) = (-1)^{k+1} B_{k+1}(\frac{a_i}{r})$, thus introducing a change of sign for even degrees $k$ that must be compensated by the other summands of the formula in a non-trivial way.

Moreover, we can use the actual generating series for one-part Hurwitz numbers in terms of hyperbolic functions of formulae \eqref{eq:gen1} and \eqref{eq:gen2} in combination with our main \cref{thm:main} to obtain generating series for particular integrals of Chiodo classes. We do so for the particular cases of $\mu = (1^d)$ and of $\mu = d$.

For $\mu = (1^d)$, \cref{eq:gen2} turns into $P_{g,d}(1, \dots, 1) = [t^{2g}]. \SSS(t)^{d-1}$, and therefore by \cref{eq:gen1} and \cref{thm:main} we obtain the following evaluation.

\begin{proposition}\label{prop:genmu1}
For all positive integers $d$, for $2g - 2 + d > 0$ we have the following generating series for integrals of Chiodo classes:
\[
1 + d \cdot \sum_{g=1} t^{2g} \int_{\oM_{g,d}} \frac{\epsilon_* \Ch_{g,d}(d, d; -1, \ldots, -1)}{\prod_{i=1}^d(1 - \frac{\psi_i}{d})} = \SSS(t)^{d-1}.
\]
\end{proposition}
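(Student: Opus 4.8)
The plan is to equate the two available expressions for the one-part double Hurwitz number $h_{g;\mu}^{\textnormal{one-part}}$ --- the combinatorial evaluation of~\cref{eq:gen1,eq:gen2} and the Chiodo-class ELSV formula of~\cref{thm:main} --- and then specialise to $\mu = (1^d)$, so that $|\mu| = d$ and $n = d$. Under this specialisation the combinatorial prefactor collapses, leaving a single coefficient extraction from $\SSS(t)^{d-1}$, and summing the resulting per-genus identities over $g$ assembles the desired generating series.

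First I would use the intermediate, unnormalised form of the Chiodo ELSV formula that appears within the proof of~\cref{thm:main}, namely
\[
h_{g; \mu_1, \ldots, \mu_n}^{\textnormal{one-part}} = d^{2g-1+n} \int_{\oM_{g,n}} \frac{\epsilon_* \Ch_{g,n}(d, d; -\mu_1, \ldots, -\mu_n)}{\prod_{i=1}^n (1 - \frac{\mu_i}{d} \psi_i)},
\]
since this is the version whose integrand matches the proposition (with $\Ch$ rather than $\Ch^{[d]}$, and $\frac{\mu_i}{d}\psi_i$ rather than $\mu_i\psi_i$). On the other hand,~\cref{eq:gen1} reads $h_{g;\mu}^{\textnormal{one-part}} = d^{2g-2+n} P_{g,n}(\mu_1^2, \ldots, \mu_n^2)$. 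Setting $\mu = (1^d)$ and inserting the evaluation $P_{g,d}(1, \ldots, 1) = [t^{2g}]\,\SSS(t)^{d-1}$ recorded just before the proposition, both right-hand sides carry the common factor $d^{2g-2+d}$; cancelling it yields the per-genus identity
\[
[t^{2g}]\,\SSS(t)^{d-1} = d \int_{\oM_{g,d}} \frac{\epsilon_* \Ch_{g,d}(d, d; -1, \ldots, -1)}{\prod_{i=1}^d (1 - \frac{\psi_i}{d})}.
\]

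To finish, I would multiply this identity by $t^{2g}$ and sum over $g \geq 1$. Because $\SSS(x) = \sinh(x/2)/(x/2)$ is an even function with $\SSS(0) = 1$, the series $\SSS(t)^{d-1}$ involves only even powers of $t$ and has constant term $[t^0]\,\SSS(t)^{d-1} = 1$; hence $\sum_{g \geq 1} t^{2g}[t^{2g}]\,\SSS(t)^{d-1} = \SSS(t)^{d-1} - 1$. Moving the constant term to the left gives exactly the stated formula. No step here presents a genuine obstacle: the argument is bookkeeping, and the only points requiring care are to work with the unnormalised Chiodo formula so that the powers of $d$ cancel cleanly, and to correctly attribute the leading $1$ to the $g = 0$ constant term of $\SSS(t)^{d-1}$ (which is precisely why the left-hand side of the proposition reads $1 + d\cdot(\cdots)$ rather than $d\cdot(\cdots)$).
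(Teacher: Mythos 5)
Your proposal is correct and is essentially the paper's own argument: the paper likewise specialises \cref{eq:gen1,eq:gen2} to $\mu = (1^d)$, giving $P_{g,d}(1,\ldots,1) = [t^{2g}]\,\SSS(t)^{d-1}$, and equates this with the (unnormalised) Chiodo-class formula from \cref{thm:main} to read off the generating series. Your bookkeeping of the powers of $d$ and the attribution of the constant term $1$ to $g=0$ match the intended derivation exactly.
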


On the other hand, for $\mu = (d)$ and $d>1$, \cref{eq:gen2} turns into 
\begin{equation}\label{eq:muisd}
(2g)! P_{g,1}(d^2) = (2g)! [t^{2g}]. \frac{\SSS(dt )}{\SSS(t)} = \sum_{k = -\frac{d-1}{2}}^{\frac{d-1}{2}} k^{2g} = 
\begin{cases}
2 p_{2g}(1,2,3,\dots, \frac{d-1}{2}) &  \text{ for } d \text{ odd},
\\
 2^{1 - 2g} p_{2g}(1,3,5\dots, d-1)  & \text{ for } d \text{ even},
\end{cases}
\end{equation}
where $p_{2g}$ is the usual power sum of homogeneous degree $2g$. Let us recall the classical Faulhaber formula
\begin{equation}
\frac{1}{(2g)!}p_{2g}(1, 2, 3, \dots, N) = \sum_{k=0}^{2g} \frac{B^{+}_k}{k!} \frac{N^{2g +1 - k}}{(2g + 1 - N)!}, \qquad \qquad \sum_{m=0}^{\infty} \frac{B^{+}_m}{m!} x^m = \frac{x}{1 - e^{-x}}.
\end{equation}
Notice that the numbers $B_m^{+}$ are Bernoulli numbers which differ from the Bernoulli numbers $B_m$ we use throughout the paper just for the case $B_{1}^{+} = 1/2$ (instead of $B_m = -1/2$) and all the others coincide. Moreover, notice that Faulhaber formula proves that $p_{2g}(1, 2, 3, \dots, N)$ is manifestly a polynomial in $N$, and that moreover of degree $2g + 1$. In order to use Faulhaber formula for even $d$ , we can simply observe that 
$$
p_{2g}(1, 3, 5, \dots, 2N-1) =  p_{2g}(1, 2, 3, \dots, 2N) - 2^{2g} p_{2g}(1, 2, 3, \dots, N).
$$
For $d=1$ instead, we get $P_{2g}(1) = \delta_{g,0} = p_{2g}(0)$. Now, combining \cref{eq:muisd} with \cref{eq:gen1}, \cref{thm:main} for $\mu = (d)$, and the considerations above we obtain the following statement.

\begin{proposition}\label{prop:genmud}
For $g \geq 1$ we have the following evaluations of integrals of Chiodo classes:

For all odd positive integers $d > 1$ we have:
\begin{equation}
d \cdot \int_{\oM_{g,1}} \!\!\! \frac{\epsilon_* \Ch_{g,1}^{[d]}(d, d; d)}{(1 - \psi_1)} 
=
\frac{2}{(2g)!} p_{2g}\left(1,2,3,\dots, \frac{d-1}{2}\right)
=
 \sum_{k=0}^{2g} \frac{B_k^+}{k!}\frac{(d-1)^{2g+1-k}}{2^{2g - k} (2g + 1 - k)!},
\end{equation}

for all even positive integers $d$ we have:
\begin{equation}
d \cdot \int_{\oM_{g,1}} \!\!\! \frac{\epsilon_* \Ch_{g,1}^{[d]}(d, d; d)}{(1 - \psi_1)} 
=
\frac{2^{1-2g}}{(2g)!} p_{2g}(1,3,5,\dots, d-1)
=
 \sum_{k=0}^{2g} \frac{B_k^+}{k!}\frac{d^{2g+1-k}}{ (2g + 1 - k)!}\left[\frac{1 - 2^{k-1}}{2^{2g -1}}\right],
\end{equation}
for $p_{2k}$ the power sum and $B_k^+$ the Bernoulli numbers with the convention defined above. For $d=1$ the first statement for odd $d$ actually still holds with the convention that 
\end{proposition}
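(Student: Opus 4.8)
The plan is to read Proposition~\ref{prop:genmud} as the $\mu = (d)$, $n = 1$ specialisation of the same mechanism that yields Proposition~\ref{prop:genmu1}, and then to convert the resulting one-variable polynomial $P_{g,1}$ into a power sum that Faulhaber's formula evaluates in closed form. Concretely, I would first specialise \cref{thm:main} --- most conveniently the intermediate Chiodo formula \eqref{eq:JPTalternative} at $q = d = |\mu|$ --- to the partition $\mu = (d)$. Here $n=1$, the combinatorial prefactor $\frac{(\mu_1/q)^{\lfloor \mu_1/q\rfloor}}{\lfloor \mu_1/q\rfloor!}$ equals $1$, and $\mu_1/q = 1$ so that the $\psi$-denominator collapses to $1 - \psi_1$; this exhibits $h_{g;(d)}^{\textnormal{one-part}}$ as a single Chiodo integral over $\oM_{g,1}$ times an explicit power of $d$. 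Comparing with the Goulden--Jackson--Vakil polynomiality \eqref{eq:gen1} at $\mu = (d)$ and cancelling the common power of $d$ identifies the left-hand side of the proposition with $P_{g,1}(d^2)$. This is the only place where the two descriptions of the Hurwitz number are married, and it reduces the entire statement to evaluating $P_{g,1}(d^2)$.

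Second, I would evaluate $P_{g,1}(d^2)$ explicitly. By \eqref{eq:gen2} with $n=1$ this is the coefficient $[t^{2g}]\,\SSS(dt)/\SSS(t)$, and the elementary input is the finite geometric (Dirichlet-kernel) identity $\sinh(dt/2)/\sinh(t/2) = \sum_k e^{kt}$ summed over the symmetric integer or half-integer range dictated by the parity of $d$. Extracting $[t^{2g}]$ from $\sum_k e^{kt}$ produces precisely the power sum $\sum_k k^{2g}$ recorded in \eqref{eq:muisd}, which already splits into $2\,p_{2g}(1,\dots,\tfrac{d-1}{2})$ for odd $d$ and $2^{1-2g}\,p_{2g}(1,3,\dots,d-1)$ for even $d$. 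Feeding these back into the reduced identity of the first step delivers the two middle expressions of the proposition at once.

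Third, I would read off the closed forms on the right by invoking the Faulhaber formula quoted in the excerpt. For odd $d$ I set $N = \tfrac{d-1}{2}$ and simplify the powers of two (the overall factor $2$ against $2^{-(2g+1-k)}$ collapses to $2^{-(2g-k)}$), giving the stated Bernoulli sum. For even $d$ the extra step is the splitting $p_{2g}(1,3,\dots,d-1) = p_{2g}(1,\dots,d) - 2^{2g}\,p_{2g}(1,\dots,d/2)$ already recorded above; applying Faulhaber with $N = d$ and $N = d/2$ and collecting coefficients of $B_k^+/k!$ produces the factor $1 - 2^{2g}\cdot 2^{-(2g+1-k)} = 1 - 2^{k-1}$, which together with the prefactor $2^{1-2g}$ yields the bracket $\tfrac{1-2^{k-1}}{2^{2g-1}}$. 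The $d=1$ case is degenerate: the cover has degree $1$, so $h_{g;(1)}^{\textnormal{one-part}} = 0$ for $g \geq 1$, in agreement with $P_{g,1}(1) = \delta_{g,0}$; the odd-$d$ formula then persists under the convention that the empty power sum $p_{2g}(1,\dots,\tfrac{d-1}{2})$ is read as $p_{2g}(0) = \delta_{g,0}$, so both sides vanish for $g \geq 1$.

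I do not anticipate a genuine obstacle, since every ingredient --- \cref{thm:main}, the polynomiality \eqref{eq:gen1}--\eqref{eq:gen2}, the $\sinh$-ratio identity, and Faulhaber --- is in hand and the content is the bookkeeping that aligns them. The delicate points are exactly two: matching the powers of $d$ in the reduction of the first step, and, in the even case, tracking the powers of $2$ together with the shifted Bernoulli convention $B_1^+ = +\tfrac12$ rather than the $B_m$ appearing in Chiodo's formula \eqref{eq:chiodoformula}, since this is the sign that distinguishes the two families and a slip would corrupt the $k=1$ term.
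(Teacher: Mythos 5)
Your proposal is correct and follows the paper's own route exactly: specialise \cref{thm:main} (equivalently \cref{eq:JPTalternative} at $q=d$) to $\mu=(d)$, identify the resulting Chiodo integral with $P_{g,1}(d^2)$ via \cref{eq:gen1}, evaluate $[t^{2g}]\,\SSS(dt)/\SSS(t)$ as the symmetric power sum $\sum_{k=-(d-1)/2}^{(d-1)/2}k^{2g}$ split by the parity of $d$ (this is \cref{eq:muisd}), and finish with Faulhaber's formula together with the splitting $p_{2g}(1,3,\dots,2N-1)=p_{2g}(1,\dots,2N)-2^{2g}\,p_{2g}(1,\dots,N)$. One caution on the point you yourself flag as delicate: the specialisation of \cref{eq:JPTalternative} produces the unweighted class $\epsilon_*\Ch_{g,1}(d,d;d)$ paired with $1-\psi_1$ (rather than $\Ch^{[d]}_{g,1}$, which would pair with $1-d\psi_1$), and $\SSS(dt)/\SSS(t)=\tfrac{1}{d}\,\sinh(dt/2)/\sinh(t/2)$ carries an extra factor $\tfrac{1}{d}$, so the powers of $d$ require more careful tracking than either your sketch or the paper's displayed statement performs.
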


The discussion above for $d=1$ instead gives the well-known
$
 \int_{\oM_{g,1}} \!\!\! \frac{\Lambda(-1)}{(1 - \psi_1)} 
=
0
$ for all $g \geq 1$ (notice for instance that \cref{prop:genmu1} for $d=1$ recovers the same result, but it can be immediately obtained from ELSV formula).

\section{Comparing ELSV formulas past and present} \label{sec:comparison}

In this section, we compare the original ELSV formula in~\cref{eq:ELSV} for single Hurwitz numbers and our new ELSV formula in~\cref{eq:ELSVdMgn} for one-part double Hurwitz numbers. Specialising these formulas in a particular way results in rather non-trivial relations between tautological intersection numbers.

\subsection{Exchanging ramification profiles} \label{subsec:exchange}

Let us begin by specialising the following two ELSV formulas.
\begin{enumerate} [label=(\roman*)]
\item First, consider the original ELSV formula in~\cref{eq:ELSV} for single Hurwitz numbers, with the tuple $(\mu_1, \ldots, \mu_n)$ set to the 1-tuple $(d)$.
\[
h_{g;d} = \frac{d^d}{d!} \int_{\oM_{g,1}} \frac{\sum_{k=0}^g (-1)^k \lambda_{k}}{1 - d \psi_1}
\]

\item Second, consider the new ELSV formula in~\cref{eq:ELSVdMgn} for one-part double Hurwitz numbers, with the tuple $(\mu_1, \ldots, \mu_n)$ set to the $d$-tuple $(1, 1, \ldots, 1)$.
\[
h_{g;1, 1, \ldots, 1}^{\textnormal{one-part}} = d^{2-g} \int_{\oM_{g,d}} \frac{\epsilon_* \Ch_{g,d}^{[d]}(d, d; -1, \ldots, -1)}{\prod_{i=1}^d (1 - \psi_i)}
\]
\end{enumerate}

Now we simply note the equality
\[
h_{g;d} = \frac{1}{d!} \, h_{g;1, 1, \ldots, 1}^{\textnormal{one-part}},
\]
which uses the symmetry that exchanges ramification profiles over $0$ and $\infty$. In other words, both sides enumerate genus $g$ branched covers of $\mathbb{CP}^1$ with ramification $(d)$ over one point, ramification profile $(1, 1, \ldots, 1)$ over another point, and simple branching elsewhere. The factor of $\frac{1}{d!}$ on the right side of the equation is to compensate for the factor $|\mathrm{Aut}(\mu)|$ that appears in the weighting attached to a branched cover, given in~\cref{eq:hurwitzweight}. This allows us to compare the two ELSV formulas and obtain the following relation.

\begin{theorem} \label{thm:comparison}
For integers $g \geq 0$ and $d \geq 1$ with $(g,d) \neq (0,1)$ or $(0,2)$, we have
\begin{equation} \label{eq:relationELSV}
\int_{\oM_{g,1}} \frac{\sum_{k=0}^g (-1)^k \lambda_{k}}{1 - d \psi_1}
= \frac{1}{d^{d + g - 2}} \int_{\oM_{g,d}} \frac{\epsilon_* \Ch_{g,d}^{[d]}(d, d; -1, \ldots, -1)}{\prod_{i=1}^d(1 - \psi_i)}.
\end{equation}
In the case $g = 0$, the left side is to be interpreted via the usual convention: $ \int_{\oM_{0,1}}\frac{1}{1 - d\psi_1} = \frac{1}{d^2}$.
\end{theorem}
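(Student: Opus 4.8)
The plan is to combine the two specialisations of the ELSV formulas displayed immediately before the theorem statement and equate them via the symmetry of double Hurwitz numbers. First I would invoke the original ELSV formula~\cref{eq:ELSV} with $(\mu_1,\ldots,\mu_n)=(d)$ to write $h_{g;d}=\frac{d^d}{d!}\int_{\oM_{g,1}}\frac{\sum_{k=0}^g(-1)^k\lambda_k}{1-d\psi_1}$, and then invoke our formula~\cref{eq:ELSVdMgn} with $(\mu_1,\ldots,\mu_n)=(1,1,\ldots,1)$ (a $d$-tuple) to write $h_{g;1,\ldots,1}^{\textnormal{one-part}}=d^{2-g}\int_{\oM_{g,d}}\frac{\epsilon_*\Ch_{g,d}^{[d]}(d,d;-1,\ldots,-1)}{\prod_{i=1}^d(1-\psi_i)}$.

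The second step is to apply the symmetry $h_{g;\mu;\nu}=h_{g;\nu;\mu}$ exchanging the ramification profiles over $0$ and $\infty$. Both $h_{g;d}$ and $h_{g;1,\ldots,1}^{\textnormal{one-part}}$ enumerate the same covers --- those with profile $(d)$ over one point and profile $(1,\ldots,1)$ over another --- but their normalisations differ by the automorphism factor $|\mathrm{Aut}(\mu)|$ appearing in~\cref{eq:hurwitzweight}. Since $\mathrm{Aut}(1,\ldots,1)=S_d$ has order $d!$ while $\mathrm{Aut}(d)$ is trivial, this yields the identity $h_{g;d}=\frac{1}{d!}\,h_{g;1,\ldots,1}^{\textnormal{one-part}}$.

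The third step is purely algebraic bookkeeping. Substituting the two ELSV expressions into this identity, the factors of $d!$ cancel, giving $d^d\int_{\oM_{g,1}}\cdots=d^{2-g}\int_{\oM_{g,d}}\cdots$; dividing both sides by $d^d$ leaves the global factor $d^{2-g-d}=d^{-(d+g-2)}$ multiplying the Chiodo integral, which is exactly~\cref{eq:relationELSV}. I would double-check the exponent arithmetic carefully, since this is the only place a sign or power error could slip in.

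The one genuine subtlety --- and the main obstacle --- is the genus $0$ case. For $g=0$ the left side involves $\oM_{0,1}$, which is not a moduli space of stable curves, so~\cref{eq:ELSV} does not literally apply; this is why the excluded cases $(g,d)=(0,1),(0,2)$ appear and why the convention $\int_{\oM_{0,1}}\frac{1}{1-d\psi_1}=\frac{1}{d^2}$ is needed. I would treat this boundary case separately: noting that $\lambda_k=0$ for $k\ge1$ in genus $0$, so the numerator on the left reduces to $1$, and adopting the stated convention, the relation becomes $\frac{1}{d^2}=\frac{1}{d^{d-2}}\int_{\oM_{0,d}}\frac{\epsilon_*\Ch_{0,d}^{[d]}(d,d;-1,\ldots,-1)}{\prod_{i=1}^d(1-\psi_i)}$ for $d\ge3$. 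This follows from running the same symmetry argument at genus $0$, where the unstable contribution on the single Hurwitz side is encoded precisely by the chosen convention, so that the identity extends continuously across the stable range.
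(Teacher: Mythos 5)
Your proposal is correct and takes essentially the same route as the paper: specialise the classical ELSV formula at $\mu=(d)$ and~\cref{eq:ELSVdMgn} at the $d$-tuple $(1,\ldots,1)$, equate them through the profile-exchange symmetry $h_{g;d}=\frac{1}{d!}\,h_{g;1,\ldots,1}^{\textnormal{one-part}}$, and clear the powers of $d$. Your separate handling of $g=0$ via the unstable convention is consistent with the paper, which adopts the same convention in the theorem statement and confirms the genus-zero case by direct computation in~\cref{sec:verification}.
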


Observe that the left side of~\cref{eq:relationELSV} is inherently polynomial in $d$, while the right side is not. At present, it is not clear how to argue that the right side is polynomial in $d$ without invoking~\cref{thm:comparison} itself.

\subsection{Another proof of \cref{prop:genmu1} }
Finally, we recall a result of Faber and Pandharipande~\cite[Theorem~2]{fab-pan}, which can be expressed as
\[
1 + \sum_{g=1}^\infty d^2 t^{2g} \int_{\oM_{g,1}} \frac{\sum_{k=0}^g (-1)^k \lambda_k}{1 - d\psi_1} = \SSS(dt)^{d-1}, 
\qquad \qquad 
\SSS(x) = \frac{\sinh(x/2)}{x/2}.
\] 
Notice that combining this with~\cref{thm:comparison} immediately produces a generating series for the integrals appearing on the right side of~\cref{eq:relationELSV}:
\[
1 + \sum_{g=1}^\infty t^{2g}  \frac{1}{d^{d + g - 4}} \int_{\oM_{g,d}} \frac{\epsilon_* \Ch_{g,d}^{[d]}(d, d; -1, \ldots, -1)}{\prod_{i=1}^d(1 - \psi_i)} = \SSS(dt)^{d-1}.
\] 

This obtained generating series is equivalent to the statement of \cref{prop:genmu1} after the normalisation of the variable $t \mapsto t/d$ and the extraction of $3g - 3 + d$ powers of $d$, and therefore provides another proof of it via Faber-Pandharipande theorem. 

This is not surprising at all: in fact Goulden-Jackson-Vakil polynomiality result of \cref{eq:gen1} and \cref{eq:gen2} provides a generalisation of Faber-Pandharipande theorem (restrict to it, as expected, for $\mu = (1^d)$), as it can be seen applying the classical ELSV formula. However, the theorem of Faber and Pandharipande was derived (shorty) before ELSV formula and hence also before Goulden-Jackson-Vakil polynomiality: its proof relies on independent methods and therefore a different proof of \cref{prop:genmu1} can be obtained this way.

\subsection{Generalisation to the double orbifold case} \label{subsec:orbifold}

The strategy used to derive~\cref{thm:comparison} may be pushed further to yield a more general statement involving $q$-orbifold Hurwitz numbers. Let us proceed by specialising the following two ELSV formulas.
\begin{enumerate} [label=(\roman*)]
\item First, consider the ELSV formula in~\cref{eq:JPTalternative} for $q$-orbifold Hurwitz numbers, with the tuple $(\mu_1, \ldots, \mu_n)$ set to the 1-tuple $(d)$, where $d$ is a multiple of $q$.
\[
h_{g;d}^{q\textnormal{-orbifold}} = q^{2g-1} \frac{d^{d/q}}{(d/q)!} \int_{\overline{\mathcal M}_{g,1}} \frac{\epsilon_* \Ch_{g,1}(q, q; q)}{\big(1 - \frac{d}{q} \psi_i \big)}
\]

\item Second, consider the new ELSV formula in~\cref{eq:ELSVdMgn} for one-part double Hurwitz, with the tuple $(\mu_1, \ldots, \mu_n)$ set to the tuple $(q, q, \ldots, q)$ with $\frac{d}{q}$ parts.
\[
h_{g; q, q, \ldots, q}^{\textnormal{one-part}} = d^{2-g} \int_{\overline{\mathcal{M}}_{g,d/q}} \frac{\epsilon_* \Ch_{g,d/q}^{[d]}(d, d; -q, \ldots, -q)}{\prod_{i=1}^{d/q} (1 - q \psi_i)}
\]
\end{enumerate}

Now we simply note the equality
\[
h_{g;d}^{q\textnormal{-orbifold}} = \frac{1}{(d/q)!} \, h_{g;q, q, \ldots, q}^{\textnormal{one-part}},
\]
which again uses the symmetry that exchanges ramification profiles over $0$ and $\infty$. This allows us to compare the two ELSV formulas and obtain the following relation.
\[
q^{2g-1} d^{d/q} \int_{\overline{\mathcal M}_{g,1}} \frac{\epsilon_* \Ch_{g,1}(q, q; q)}{1 - \frac{d}{q} \psi_i} = d^{2-g} \int_{\overline{\mathcal{M}}_{g,d/q}} \frac{\epsilon_* \Ch_{g,d/q}^{[d]}(d, d; -q, \ldots, -q)}{\prod_{i=1}^{d/q} (1 - q \psi_i)}
\]
Performing the substitutions $q = a$ and $d = a b$ leads to the following result.

\begin{theorem} \label{thm:orbifoldcomparison}
For integers $g \geq 0$ and $a, b \geq 1$ with $(g,b) \neq (0,1)$ or $(0,2)$, we have
\[
\int_{\overline{\mathcal M}_{g,1}} \frac{\epsilon_* \Ch_{g,1}(a, a; a)}{1 - b \psi_i} = \frac{1}{b^{b+g-2}} \int_{\overline{\mathcal{M}}_{g,b}} \frac{\epsilon_* \Ch_{g,n}^{[b]}(a b, a b; -a, \ldots, -a)}{\prod_{i=1}^b (1-\psi_i)}.
\]
\end{theorem}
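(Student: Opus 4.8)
The plan is to run the exchange-of-ramification argument that produced \cref{thm:comparison}, now in the orbifold setting, and then tidy the resulting identity by a degree-rescaling of the Chiodo class. First I would specialise the two ELSV formulas recalled just above the theorem: \cref{eq:JPTalternative} with $\mu = (d)$ (where $q \mid d$) on one side, and \cref{eq:ELSVdMgn} with $\mu = (q, q, \ldots, q)$ having $d/q$ parts on the other. The essential input is the symmetry $h_{g;d}^{q\textnormal{-orbifold}} = \tfrac{1}{(d/q)!}\, h_{g;q,\ldots,q}^{\textnormal{one-part}}$, valid because both sides enumerate the same covers with the roles of $0$ and $\infty$ interchanged; the factor $(d/q)! = |\mathrm{Aut}(q,\ldots,q)|$ accounts for the automorphism weighting of \cref{eq:hurwitzweight}. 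Equating the two intersection-number expressions and cancelling the common prefactor $\tfrac{d^{d/q}}{(d/q)!}$ gives the displayed identity, into which I would substitute $q = a$ and $d = ab$, so that $d/q = b$.

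After this substitution the right-hand integrand is $\epsilon_*\Ch_{g,b}^{[ab]}(ab, ab; -a, \ldots, -a) \big/ \prod_{i=1}^b (1 - a\psi_i)$, whereas the theorem asks for superscript $[b]$ and denominator $\prod_{i=1}^b(1-\psi_i)$. The main step is to pass between these using the degree-rescaling operator $R_x$ that multiplies each cohomology class of degree $2k$ by $x^k$. Three facts make this routine: $R_x$ is a ring homomorphism on graded cohomology, so $R_x(\Ch^{[y]}) = \Ch^{[xy]}$ directly from the exponential formula defining $\Ch^{[\cdot]}$ (see \cref{eq:chiodo}); it commutes with the finite pushforward $\epsilon_*$, which preserves cohomological degree; and $R_x\bigl((1-\mu_i\psi_i)^{-1}\bigr) = (1 - x\mu_i\psi_i)^{-1}$. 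Applying $R_{1/a}$ to the integrand thus turns $\Ch^{[ab]}$ into $\Ch^{[b]}$ and each $(1-a\psi_i)^{-1}$ into $(1-\psi_i)^{-1}$, while integration over $\oM_{g,b}$, which extracts the degree $3g-3+b$ part, contributes a global factor $a^{-(3g-3+b)}$. Equivalently, the original right-hand integral equals $a^{3g-3+b}$ times the integral appearing in the theorem.

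The final step is bookkeeping. Collecting the prefactor $a^{2g-1}(ab)^b = a^{2g-1+b}b^b$ on the left against $(ab)^{2-g}\cdot a^{3g-3+b} = a^{2g-1+b}b^{2-g}$ on the right, the powers of $a$ cancel identically, leaving $b^b$ against $b^{2-g}$ and hence the constant $b^{2-g-b} = b^{-(b+g-2)}$, as claimed; the constraint $(g,b) \neq (0,1),(0,2)$ is exactly what keeps both $\oM_{g,1}$ and $\oM_{g,b}$ in the stable range where the two source formulas apply. I expect no real obstacle, since the argument is structurally identical to \cref{thm:comparison} and to the rescaling already used in the proof of \cref{thm:main} to pass from the $\Ch$ form to the $\Ch^{[d]}$ form. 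The only point demanding care is the precise tracking of the exponents of $a$ and $b$ through the substitution and the rescaling: it is exactly the cancellation of every power of $a$ that makes the statement depend only on $b$ and $g$, so any slip in an exponent would destroy the result.
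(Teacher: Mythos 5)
Your proposal is correct and takes essentially the same route as the paper: specialise \cref{eq:JPTalternative} at $\mu=(d)$ and \cref{eq:ELSVdMgn} at $\mu=(q,\ldots,q)$ with $d/q$ parts, invoke the $0\leftrightarrow\infty$ exchange symmetry, substitute $q=a$, $d=ab$, and then rescale by cohomological degree --- the last step being left implicit in the paper but spelled out correctly by you, with the exponent bookkeeping ($a^{2g-1+b}$ cancelling on both sides, leaving $b^{2-g-b}$) matching exactly. One phrasing slip: only $\tfrac{1}{(d/q)!}$ is genuinely common to the two sides (the factor $d^{d/q}$ appears only on the orbifold side), but since your final collection of prefactors correctly keeps $(ab)^b$ on the left, this does not affect the argument.
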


Following a suggestion of Sergey Shadrin, we can push the strategy used to derive~\cref{thm:comparison,thm:orbifoldcomparison} even further. Consider positive integers $p$ and $q$, as well as a positive integer $d$ that is a multiple of them both. The symmetry that exchanges ramification profiles over $0$ and $\infty$ leads to the following equality of degree $d$ orbifold Hurwitz numbers.
\[
\frac{h_{g; p, p, \ldots, p}^{q\textnormal{-orbifold}}}{(d/p)!} = \frac{h_{g; q, q, \ldots, q}^{p\textnormal{-orbifold}}}{(d/q)!}
\]
Each side of this equation may be expressed as a specialisation of the the orbifold ELSV formula in~\cref{eq:JPTalternative} to obtain the following generalisation of~\cref{thm:orbifoldcomparison}.

\begin{theorem} \label{thm:doubleorbifoldcomparison}
Let $p < q$ be positive integers and $d$ a multiple of them both. For integers $g \geq 0$ and $d \geq 1$ with $(g, d/p), (g, d/q) \neq (0,1)$ or $(0,2)$, we have
\begin{multline*}
\frac{1}{(d/p)! \, p^{2g-2+\frac{d}{p}+\frac{d}{q}}} \int_{\overline{\mathcal M}_{g,d/p}} \frac{\epsilon_* \mathrm{Chiodo}(q, q; -p, \ldots, -p)}{\prod_{i=1}^{d/p} (1 - \frac{p}{q} \psi_i)} \\
= \frac{1}{(d/q)! \, q^{2g-2+\frac{d}{p}+\frac{d}{q}}} \left( \frac{(q/p)^{\lfloor q/p \rfloor}}{\lfloor q/p \rfloor!} \right)^{d/q} \int_{\overline{\mathcal M}_{g,d/q}} \frac{\epsilon_* \mathrm{Chiodo}(p, p; -q, \ldots, -q)}{\prod_{i=1}^{d/q} (1 - \frac{q}{p} \psi_i)}.
\end{multline*}
\end{theorem}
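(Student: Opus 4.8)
The plan is to run exactly the argument that produced \cref{thm:orbifoldcomparison}, but now feeding the symmetry equality recorded immediately above the statement into the orbifold ELSV formula of \cref{eq:JPTalternative} on \emph{both} sides. Set $E = 2g - 2 + \frac{d}{p} + \frac{d}{q}$. First I would apply \cref{eq:JPTalternative} to the $q$-orbifold Hurwitz number $h_{g; p, \ldots, p}^{q\textnormal{-orbifold}}$, whose ramification profile over $\infty$ has $n = d/p$ parts all equal to $p$ and satisfies $|\mu| = d$; its ELSV prefactor is $q^{E}$ and, crucially, since $p < q$ we have $\lfloor p/q \rfloor = 0$, so every combinatorial factor $\frac{(p/q)^{\lfloor p/q \rfloor}}{\lfloor p/q \rfloor!}$ equals $1$ and the product collapses. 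Applying \cref{eq:JPTalternative} to the $p$-orbifold Hurwitz number $h_{g; q, \ldots, q}^{p\textnormal{-orbifold}}$ (now with $n = d/q$ parts all equal to $q$) gives the matching prefactor $p^{E}$, but this time $\lfloor q/p \rfloor \geq 1$, so the combinatorial factor $\big( (q/p)^{\lfloor q/p \rfloor} / \lfloor q/p \rfloor! \big)^{d/q}$ survives. This asymmetry between the two prefactors, forced by the hypothesis $p < q$, is the only genuinely delicate bookkeeping point.

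Next I would substitute both expressions into the symmetry equality
\[
\frac{h_{g; p, \ldots, p}^{q\textnormal{-orbifold}}}{(d/p)!} = \frac{h_{g; q, \ldots, q}^{p\textnormal{-orbifold}}}{(d/q)!},
\]
which holds because the two sides enumerate the same branched covers of $\mathbb{CP}^1$ --- those with ramification profile $(p^{d/p})$ over one special point and $(q^{d/q})$ over the other --- once each orbifold Hurwitz number is divided by $|\mathrm{Aut}(\mu)|$, namely $(d/p)!$ and $(d/q)!$ respectively, to cancel the corresponding factor appearing in the weighting of \cref{eq:hurwitzweight}. This yields
\[
\frac{q^{E}}{(d/p)!} \int_{\overline{\mathcal M}_{g,d/p}} \frac{\epsilon_* \Ch_{g,d/p}(q, q; -p, \ldots, -p)}{\prod_{i=1}^{d/p} (1 - \frac{p}{q} \psi_i)} = \frac{p^{E}}{(d/q)!} \Big( \frac{(q/p)^{\lfloor q/p \rfloor}}{\lfloor q/p \rfloor!} \Big)^{d/q} \int_{\overline{\mathcal M}_{g,d/q}} \frac{\epsilon_* \Ch_{g,d/q}(p, p; -q, \ldots, -q)}{\prod_{i=1}^{d/q} (1 - \frac{q}{p} \psi_i)}.
\]

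Finally I would divide both sides by $(pq)^{E}$, which turns the prefactor $q^{E}$ on the left into $p^{-E}$ and the prefactor $p^{E}$ on the right into $q^{-E}$, so that each integral ends up weighted by a power of the \emph{opposite} variable. The result is precisely the symmetric identity asserted in the statement. Since everything reduces to substitution and rearrangement once \cref{eq:JPTalternative} and the symmetry equality are available, there is no serious obstacle; the only items needing care are the unequal combinatorial prefactors noted above and the verification that the stability hypotheses $(g, d/p), (g, d/q) \neq (0,1), (0,2)$ ensure both applications of \cref{eq:JPTalternative} are legitimate and both moduli spaces $\overline{\mathcal M}_{g,d/p}$ and $\overline{\mathcal M}_{g,d/q}$ are nonempty.
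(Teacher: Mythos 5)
Your proposal is correct and follows essentially the same route as the paper: the paper likewise combines the ramification-exchange symmetry $h_{g; p, \ldots, p}^{q\textnormal{-orbifold}}/(d/p)! = h_{g; q, \ldots, q}^{p\textnormal{-orbifold}}/(d/q)!$ with two applications of the orbifold ELSV formula of \cref{eq:JPTalternative} and then rearranges the prefactors. Your bookkeeping --- the collapse of the combinatorial factors on the $p<q$ side since $\lfloor p/q \rfloor = 0$, the surviving factor $\bigl((q/p)^{\lfloor q/p \rfloor}/\lfloor q/p \rfloor!\bigr)^{d/q}$ on the other side, and the final division by $(pq)^{2g-2+\frac{d}{p}+\frac{d}{q}}$ --- is exactly what the paper's derivation amounts to.
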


\section{Verification in low genus} \label{sec:verification}

The goal of this section is to check by hand some of the first cases of the relations between tautological numbers obtained in the previous section. We will focus on~\cref{thm:comparison}, computing explicitly both sides for genus zero and all $d$, as well as for genus one and $d=1$. Also, higher $d$ cases have been tested by means of the Sage package \textit{admcycles}, see \cite{adm}.

\subsubsection*{Genus 0}

Let us verify~\cref{eq:relationELSV} in genus zero. Recall that, for the unstable cases $(g,n) = (0,1)$ and $(0,2)$, integrals of weighted psi-classes are taken by the usual convention to be
\[
 \int_{\oM_{0,1}}\frac{1}{1 - x \psi_1} = \frac{1}{x^2} \qquad \text{and} \qquad \int_{\oM_{0,2}}\frac{1}{(1 - x \psi_1)(1 - y \psi_2)} = \frac{1}{x+y}.
\]
In genus zero, the Hodge bundle $\Lambda$ is trivial, so~\cref{eq:relationELSV} reads
\begin{equation}\label{eq:g0rel}
\int_{\oM_{0,1}} \frac{1}{1 - d \psi_1} = \frac{1}{d^{d - 2}} \int_{\oM_{0,d}} \frac{\epsilon_{*} \Ch_{0,d}^{[d]}(d, d; -1, \ldots, -1)}{\prod_{i=1}^d(1 - \psi_i)}.
\end{equation}

The left side of~\cref{eq:g0rel} is simply $\frac{1}{d^2}$, using the unstable calculation mentioned previously. So let us now compute the right side for $d \geq 3$. In genus zero, the geometric situation is rather simple. Even though the Chiodo class is defined a priori as 
\[
c( - R^*\pi_*\mathcal{L}) = c( - R^0\pi_*\mathcal{L} + R^1\pi_*\mathcal{L} ),
\] 
the result of~\cite[Proposition~4.4]{jar-kim-vai} guarantees that $R^0\pi_*\mathcal{L}$ vanishes in genus zero. Hence, the Chiodo class becomes the Chern class of an actual vector bundle, and thus, vanishes in degree higher than its rank. For the general Chiodo class $\Ch_{g,n}(r, s; a_1, \ldots, a_n)$, the Riemann--Roch theorem for line bundles gives
\[
\frac{(2g - 2 + n)s - \sum_{i=1}^n a_i}{r} - g + 1 = h^0 - h^1.
\]
After substituting $g=0$, $r = s = d = n$, $a_i = d-1$, and setting $h^0 = 0$, we find that the rank is equal to 
\[
\frac{(d - 2)d - d(d-1)}{d} + 1= 0.
\]
Therefore, the Chiodo class in this case contributes only in degree zero, so it must be equal to 1. The pushforward then produces a global factor of $d^{-1}$. Therefore, the right side of~\cref{eq:g0rel} is equal to 
\[
\frac{1}{d^{d-2}} \cdot \frac{1}{d} \int_{\oM_{0,d}} \frac{1}{\prod_{i=1}^d(1 - \psi_i)} = \frac{1}{d^{d-1}} \sum_{a_1 + \cdots + a_d = d-3} \binom{d-3}{a_1, \ldots, a_d} = \frac{1}{d^{d-1}} \, d^{d-3} = \frac{1}{d^2},
\]
which completes the verification. Note that we have used here the well-known formula for psi-class intersection numbers in genus zero~\cite{har-mor}.

\subsubsection*{Genus 1}

Let us consider~\cref{eq:relationELSV} in genus one, which can be expressed as
\begin{equation}\label{eq:relg1}
d^{d-1} \int_{\oM_{1,1}} \frac{1 - \lambda_1}{1 - d \psi_1} = \int_{\oM_{1,d}} \frac{\epsilon_* \Ch_{1,d}^{[d]}(d, d; -1, \ldots, -1)}{\prod_{i=1}^d(1 - \psi_i) }.
\end{equation}
The left side is immediately computed as $d^{d-1} \cdot \frac{d-1}{24}$.

The calculation of the right side of~\cref{eq:relg1} for general $d$ is computationally intensive. One difficulty lies in the fact that, whereas the Hodge class vanishes in degree higher than the genus, such vanishing for the Chiodo class cannot be guaranteed. At present, it is unclear whether such vanishing may arise in our case, in which the Chiodo class parameters are tuned in a particular way.

We will proceed by computing the contribution of the degree zero and degree one terms from the Chiodo class. These are the only terms that contribute to the right side of~\cref{eq:relg1} in the case $d = 1$, since $\oM_{1,1}$ has dimension one. This will allow us to check~\cref{eq:relg1} only in the case $d = 1$ and may convince the reader that further checks require significant computation or new ideas.

Our computations will rely on the following result~\cite[Theorem~2.3]{ELSV}.

\begin{lemma} \label{lem:g1psi}
For $\mu_1 + \cdots + \mu_n = d$, we have
\[
\int_{\oM_{1, n}} \frac{1}{\prod_{i=1}^{n} (1 - \mu_i \psi_i)} = \frac{1}{24} \Bigg[ d^n - \sum_{j=2}^{n} (j-2)! \, d^{n - j} \, e_j(\mu_1, \ldots, \mu_{n}) \Bigg],
\]
where $e_j$ denotes the $j$th elementary symmetric polynomial.
\end{lemma}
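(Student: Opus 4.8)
The plan is to expand the integrand and reduce everything to top-degree genus-one $\psi$-intersection numbers, which are completely determined by the string and dilaton equations. Expanding $\prod_{i=1}^n (1-\mu_i\psi_i)^{-1} = \sum_{a_1, \ldots, a_n \geq 0} \prod_i \mu_i^{a_i}\psi_i^{a_i}$ and integrating term by term, and using $\dim \oM_{1,n} = n$, the left-hand side becomes $\sum_{a_1 + \cdots + a_n = n} \big(\prod_i \mu_i^{a_i}\big) \int_{\oM_{1,n}} \psi_1^{a_1}\cdots\psi_n^{a_n}$. The crucial observation is a dichotomy on these top monomials: either some exponent $a_i$ vanishes, or every $a_i \geq 1$, in which case $\sum a_i = n$ forces all $a_i = 1$. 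Consequently every top intersection number is reachable from $\int_{\oM_{1,1}}\psi_1 = \tfrac{1}{24}$, either by the string equation (whenever a factor $\psi_i^0$ is present) or by the dilaton equation (for the single monomial $(1,\ldots,1)$).

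I would then prove the formula by induction on $n$, writing $F_n := \tfrac{1}{24}\big[d^n - \sum_{j=2}^n (j-2)!\, d^{n-j} e_j\big]$ for the conjectured value and $G_n$ for the actual integral. The inductive input is packaged into two facts which, together with the dichotomy above, pin down the symmetric homogeneous polynomial $G_n$ of degree $n$ entirely. First, the string equation applied at the point $i$ gives $G_n|_{\mu_i = 0} = \big(\sum_{j \neq i}\mu_j\big)\, G_{n-1}(\mu_1, \ldots, \widehat{\mu_i}, \ldots, \mu_n)$ for each $i$. Second, the dilaton equation gives that the coefficient of $\mu_1\cdots\mu_n$ in $24\, G_n$ equals $(n-1)!$. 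Every coefficient of $G_n$ is fixed by one of these: a monomial with some zero exponent is read off from the corresponding slice, while the unique multilinear monomial is fixed by the dilaton coefficient.

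It then remains to verify that $F_n$ satisfies both properties, invoking $F_{n-1} = G_{n-1}$. For the first, setting $\mu_i = 0$ and using $e_n(\ldots, 0) = 0$ and $e_j(\ldots, 0) = e_j(\text{remaining variables})$, both $F_n|_{\mu_i=0}$ and $\big(\sum_{j \neq i}\mu_j\big) F_{n-1}$ collapse to $d'^{\,n} - \sum_{j=2}^{n-1}(j-2)!\, d'^{\,n-j} e_j$ with $d' = \sum_{j \neq i}\mu_j$, so they agree. For the second, the coefficient of $\mu_1\cdots\mu_n$ in $24 F_n$ is $n! - \sum_{j=2}^n (j-2)!\,\binom{n}{j}(n-j)! = n!\big(1 - \sum_{j=2}^n \tfrac{1}{j(j-1)}\big)$, and the telescoping $\sum_{j=2}^n \tfrac{1}{j(j-1)} = 1 - \tfrac1n$ yields $n!\cdot\tfrac1n = (n-1)!$ as required. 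The base case $n=1$ is the direct computation $24\, G_1 = 24\,\mu_1\!\int_{\oM_{1,1}}\psi_1 = \mu_1 = d$.

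The main obstacle is the uniqueness argument, namely confirming that the string-equation slices together with the single dilaton coefficient genuinely cover every monomial and hence determine the polynomial; the dichotomy on top monomials is exactly what makes this transparent, and it is what lets me avoid introducing $\kappa$-classes that would otherwise arise from pushing forward higher powers of a $\psi$-class. The only other computation of note is the telescoping sum, which is elementary.
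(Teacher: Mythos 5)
Your proof is correct, but note that the paper itself contains no proof of this lemma: it is imported as a known result, cited as Theorem~2.3 of Ekedahl--Lando--Shapiro--Vainshtein~\cite{ELSV}, so your argument is by construction a different route --- a self-contained one. All the key steps check out. Since $\dim \oM_{1,n} = n$, only exponent vectors with $\sum a_i = n$ contribute, and your dichotomy (some $a_i = 0$, or else $a_1 = \cdots = a_n = 1$) is exactly what makes genus one tractable by string and dilaton alone, with the dilaton iteration giving $\int_{\oM_{1,n}} \psi_1 \cdots \psi_n = (n-1)!/24$. The slice identity $G_n|_{\mu_i = 0} = \big(\sum_{j \neq i} \mu_j\big) G_{n-1}$ is a faithful repackaging of the string equation (the reindexing $a_k \mapsto a_k - 1$ at the point absorbing the $\tau_0$ insertion is a bijection between the contributing terms), and the uniqueness step is sound: a homogeneous degree-$n$ polynomial in $n$ variables whose restriction to every hyperplane $\{\mu_i = 0\}$ vanishes must be a multiple of $\mu_1 \cdots \mu_n$, whose coefficient is then pinned by the dilaton value. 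The two verifications for the closed formula are also right: $e_n|_{\mu_i=0} = 0$ makes the $j = n$ term drop out of the slice, and $(j-2)!\,\binom{n}{j}(n-j)! = n!/\big(j(j-1)\big)$ together with the telescoping sum yields $(n-1)!$. (One cosmetic slip: in the slice comparison you omit the overall factor $\tfrac{1}{24}$; it is common to both sides, so nothing breaks.) What your route buys is that the genus-one verification in \cref{sec:verification} becomes self-contained, resting only on the string and dilaton equations and $\int_{\oM_{1,1}} \psi_1 = \tfrac{1}{24}$; what the citation buys the authors is brevity, deferring to the standard reference where this evaluation was first recorded.
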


\subsubsection*{Right side of~\cref{eq:relg1}: degree zero} 

The degree zero part of the Chiodo class is simply equal to 1, although a global factor of $d$ arises from the pushforward to the moduli space of curves. So the contribution of the Chiodo class in degree zero can be calculated using~\cref{lem:g1psi} to obtain
\begin{equation} \label{eq:summand0}
d \int_{\oM_{1,d}} \frac{1}{\prod_{i=1}^d(1 - \psi_i)} = \frac{d}{24} \Bigg[ d^d - \sum_{j=2}^{d} (j-2)! \, d^{d-j} \, \binom{d}{j} \Bigg].
\end{equation}

\subsubsection*{Right side of~\cref{eq:relg1}: degree one}

Chiodo's formula of~\cref{eq:chiodoformula} asserts that the first Chern character associated to the Chiodo class of~\cref{eq:relg1} equals
\begin{align*}
\ch_1(d, d; d-1, \ldots, d-1) &= \frac{1}{2!} \left[ B_2(1) \, \kappa_1 - \sum_{i=1}^d B_2(\tfrac{d-1}{d}) \, \psi_i + \frac{d}{2} \sum_{a=0}^{d-1} B_2(\tfrac{a}{d}) \, {j_a}_* 1 \right] \\
&= \frac{1}{12} \kappa_1 - \sum_{i=1}^d \frac{d^2 - 6d + 6}{12 \, d^2} \, \psi_i + \frac{d}{4} \sum_{a=0}^{d-1} \frac{d^2 - 6ad + 6a^2}{6 d^2} \, {j_a}_* 1.
\end{align*}
This allows us to express the degree one contribution of the right side of~\cref{eq:relg1} as follows.
\[
- \frac{d}{12} \int_{\oM_{1,d}} \frac{\epsilon_* \kappa_1}{\prod_{i=1}^d(1 - \psi_i)} + \frac{d^2-6d+6}{12} \int_{\oM_{1,d}} \frac{\epsilon_* \psi_1}{\prod_{i=1}^d(1 - \psi_i)} - \frac{1}{24} \sum_{a=0}^{d-1} (d^2-6ad+6a^2) \int_{\oM_{1,d}} \frac{\epsilon_* {j_a}_* 1}{\prod_{i=1}^d(1 - \psi_i)}
\]

We now proceed to compute each of the three summands above separately. The evaluation of the first summand amounts to compute
\[
- \frac{d^2}{12} \int_{\oM_{1,d}} \frac{ \kappa_1}{\prod_{i=1}^d(1 - \psi_i)} = - \frac{d^2}{12} \sum_{a_1 + \cdots + a_d = d-1} \int_{\oM_{1,d+1}} \psi_1^{a_1} \cdots \psi_d^{a_d} \psi_{d+1}^2.
\]
Note that an extra factor of $d$ arises from the pushforward of the class $\kappa_1$ from the moduli space of spin curves. Using~\cref{lem:g1psi}, we calculate the first summand as follows.
\begin{align}
& - \frac{d^2}{12} \sum_{a_1 + \cdots + a_d = d-1} \int_{\oM_{1,d+1}} \psi_1^{a_1} \cdots \psi_d^{a_d} \psi_{d+1}^2 \notag \\
={}& \frac{-d^2}{12 \cdot 24} \left[ \mu_{d+1}^2 \right] \Bigg[ \bigg( \sum_{i=1}^d \mu_i + \mu_{d+1} \bigg)^{d+1} - \sum_{j=2}^{d+1} (j-2)! \, \bigg( \sum_{i=1}^d \mu_i + \mu_{d+1} \bigg)^{d+1 - j} e_j(\mu_1, \ldots, \mu_{d+1}) \Bigg]_{\mu_1 = \cdots = \mu_d = 1} \notag \\
={}& \frac{-d^2}{12 \cdot 24} \Bigg[ \binom{d+1}{2} \bigg(\sum_{i=1}^d \mu_i \bigg)^{d-1} - \sum_{j=2}^{d+1} (j-2)! \, \Bigg[ \binom{d+1-j}{2} \bigg( \sum_{i=1}^d \mu_i \bigg)^{d - 1 - j} e_j(\mu_1, \ldots, \mu_{d}) \notag \\
& \qquad \qquad \quad + \binom{d+1-j}{1} \bigg( \sum_{i=1}^d \mu_i \bigg)^{d - j} e_{j-1}(\mu_1, \ldots, \mu_{d}) \Bigg] \Bigg]_{\mu_1 = \cdots = \mu_d = 1} \notag \\
={}& \frac{-d^2}{12 \cdot 24} \Bigg[ \binom{d+1}{2}d^{d-1} - \sum_{j=2}^{d+1} (j-2)! \, \bigg[ \binom{d+1-j}{2} d^{d-1 - j} \binom{d}{j} + \binom{d+1-j}{1} d^{d - j} \binom{d}{j-1} \bigg] \Bigg] \label{eq:summand1}
\end{align}

The evaluation of the second summand can be written as follows, using the fact that the pushforward produces a factor of $d^{1}$.
\begin{align*}
\frac{d^2 - 6d + 6}{12} \int_{\oM_{1,d}} \frac{\epsilon_* \psi_1}{\prod_{i=1}^d(1 - \psi_i)} &= \frac{d(d^2 - 6d + 6)}{12} \int_{\oM_{1,d}} \frac{ \psi_1}{\prod_{i=1}^d(1 - \psi_i)} \\
&= \frac{d(d^2 - 6d + 6)}{12} \Bigg[ \int_{\oM_{1,d}} \frac{1}{\prod_{i=1}^d(1 - \psi_i)} - \int_{\oM_{1,d}} \frac{1}{\prod_{i=2}^d(1 - \psi_i)} \Bigg]
\end{align*}
Setting all $\mu_i = 1$ in~\cref{lem:g1psi}, we get
\[
\int_{\oM_{1,d}} \frac{1}{\prod_{i=1}^d (1 - \psi_i)} = \frac{1}{24} \Bigg[ d^d - \sum_{j=2}^{d} (j-2)! \, d^{d-j} \binom{d}{j} \Bigg],
\]
and similarly, setting all $\mu_i = 1$ with the exception of $\mu_1 = 0$, we have
\[
\int_{\oM_{1,d}} \frac{1}{\prod_{i=2}^d (1 - \psi_i)} = \frac{1}{24} \Bigg[ (d-1)^d - \sum_{j=2}^{d} (j-2)! \, (d-1)^{d-j} \binom{d-1}{j} \Bigg].
\]
Therefore, the second summand of the degree one contribution is
\begin{equation} \label{eq:summand2}
\frac{d(d^2 - 6d + 6)}{12 \cdot 24} \Bigg[ d^d - (d-1)^d - \sum_{j=2}^{d} (j-2)! \bigg[ d^{d-j} \binom{d}{j} - (d-1)^{d-j} \binom{d-1}{j} \bigg] \Bigg].
\end{equation}

For the third summand, we adopt the language of stable graphs, as per the Janda--Pandharipande--Pixton--Zvonkine formula for the pushforward of the Chiodo class \cite[Corollary~4]{JPPZ}; we refer the reader to their paper for further details. The calculation of the class $\epsilon_*\ch_1(d, d; d-1, \ldots, d-1)$ on $\overline{\mathcal M}_{1, d}$ requires contributions from the two types of stable graphs pictured below. In both cases, the number of leaves in total is $d$ and the leaves are labelled with the Chiodo class parameter $d-1$. The label $a \in \{0, 1, \ldots, d-1\}$ on the half-edge matches the multiplicity index $a$ appearing in Chiodo's formula and this forces the incident half-edge to be labelled by $d-a$, according to the local edge condition. The pushforward of a Chiodo class to $\overline{\mathcal M}_{g,n}$ introduces a factor of $d^{2g-1-h^1(\Gamma)}$ for the stable graph $\Gamma$, where $h^1$ denotes the first Betti number. Thus, we obtain the extra contribution of $d^1$ from the stable graph on the left and $d^0$ for the stable graph on the right.

\begin{figure} [ht!]
\centering
\begin{tikzpicture}
\draw (0,0) -- (3,0);
\draw (0,0) -- (-1,1);
\draw (0,0) -- (-1,-1);
\draw (0,0) -- (-1.3660,0.3660);

\draw (3,0) -- (4,1);
\draw (3,0) -- (4,-1);
\draw (3,0) -- (4.3660,0.3660);

\filldraw (-1.0607,-0.0000) circle (0.02);
\filldraw (-1.0245,-0.2745) circle (0.02);
\filldraw (-0.9186,-0.5303) circle (0.02);

\filldraw (4.0607,-0.0000) circle (0.02);
\filldraw (4.0245,-0.2745) circle (0.02);
\filldraw (3.9186,-0.5303) circle (0.02);

\filldraw[draw=black,fill=white] (0,0) circle (0.4);
\filldraw[draw=black,fill=white] (3,0) circle (0.4);
\node at (0,0) {1};
\node at (3,0) {0};

\node[left] at (-1,1) {$d-1$};
\node[left] at (-1.3660,0.3660) {$d-1$};
\node[left] at (-1,-1) {$d-1$};

\node[right] at (4,1) {$d-1$};
\node[right] at (4.3660,0.3660) {$d-1$};
\node[right] at (4,-1) {$d-1$};

\node[above right] at (0.4,0) {$a$};
\node[above left] at (2.6,0) {$d-a$};

\begin{scope}[xshift=10cm]
\draw (0,0) -- (-1,1);
\draw (0,0) -- (-1,-1);
\draw (0,0) -- (-1.3660,0.3660);

\filldraw (-1.0607,-0.0000) circle (0.02);
\filldraw (-1.0245,-0.2745) circle (0.02);
\filldraw (-0.9186,-0.5303) circle (0.02);

\draw plot [smooth,tension=1] coordinates {(0,0) (0.85,0.5) (1.5,0) (0.85,-0.5) (0,0)};
\filldraw[draw=black,fill=white] (0,0) circle (0.4);
\node at (0,0) {0};

\node[left] at (-1,1) {$d-1$};
\node[left] at (-1.3660,0.3660) {$d-1$};
\node[left] at (-1,-1) {$d-1$};

\node[above] at (0.85,0.5) {$a$};
\node[below] at (0.85,-0.5) {$d-a$};
\end{scope}
\end{tikzpicture}
\end{figure}

Let us proceed by analysing the contribution of the stable graph on the left. The local vertex condition requires that the sum of the labels of the half-edges adjacent to any given vertex is 0 modulo $d$. This imposes the constraint that the number of leaves on the genus 1 vertex is $a$, so the number of leaves on the genus 0 vertex is $d-a$. The stability condition requires the genus 0 vertex to have valence at least 2, which then rules out $a = d-1$.

Note that, for each fixed $a$, this stable graph can be obtained in $\binom{d}{a}$ ways, which correspond to the choices of markings of the leaves on the genus one vertex. Multiplying by the factor $d^{1}$ from the pushforward, the contribution resulting from this type of stable graph is as follows.
\begin{multline} \label{eq:summand3a}
- \frac{d}{24} \sum_{a=0}^{d-2} (d^2 - 6ad + 6a^2) \binom{d}{a}\left( \int_{\oM_{1, a+1} } \frac{\psi_{a+1}^0}{\prod_{i=1}^a (1 - \psi_i)} \right) \left( \int_{\oM_{0, d-a+1} } \frac{\psi_{d-a+1}^0 }{\prod_{i=1}^{d-a} (1 - \psi_i)} \right) \\
= -\frac{d}{24} \sum_{a=0}^{d-2} (d^2 - 6ad + 6a^2) \binom{d}{a} \Bigg[ a^{a+1} - \sum_{j=2}^{a+1} (j-2)! \, a^{a+1-j} \binom{a}{j} \Bigg] (d-a)^{d-a-2}
\end{multline}

Now let us analyse the contribution of the stable graph on the right. Observe that any labelling of the half-edges with $a$ and $d-a$ automatically satisfies the local vertex condition. As previously mentioned, the pushforward produces a factor of $d^0$ in this case. The integration pulls back to the space $\oM_{0, d+2}$, without psi-classes attached to the branches of the desingularized node, so the contribution resulting from this type of stable graph is as follows.
\begin{equation} \label{eq:summand3b}
- \frac{1}{24} \sum_{a=0}^{d-1} (d^2 - 6ad + 6a^2) \left( \int_{\oM_{0, d+2} } \frac{\psi_{d+1}^0 \psi_{d+2}^0}{\prod_{i=1}^d (1 - \psi_i)} \right) = - \frac{1}{24} \sum_{a=0}^{d-1} \left( d^2 - 6ad + 6a^2\right) d^{d-1} = - \frac{1}{24} d^d
\end{equation}

Finally, the total degree zero and one contribution on the right side of~\cref{eq:relg1} is obtained by adding the results of~\cref{eq:summand0,eq:summand1,eq:summand2,eq:summand3a,eq:summand3b}, and we obtain the following.
\begin{align}\label{eq:fourlines}
&\frac{d}{24} \Bigg[ d^d - \sum_{j=2}^{d} (j-2)! \, d^{d-j} \binom{d}{j} \Bigg] \notag \\
&-\frac{d^2}{12 \cdot 24}
\Bigg[ \binom{d+1}{2}d^{d-1} - \sum_{j=2}^{d+1} (j-2)! \, \bigg[ \binom{d+1-j}{2} d^{d-1 - j} \binom{d}{j} + \binom{d+1-j}{1} d^{d - j} \binom{d}{j-1} \bigg] \Bigg] \notag \\
&+ \frac{d(d^2 - 6d + 6)}{12 \cdot 24} \Bigg[ d^d - (d-1)^d - \sum_{j=2}^{d} (j-2)! \bigg[ d^{d-j} \binom{d}{j} - (d-1)^{d-j} \binom{d-1}{j} \bigg] \Bigg] \notag \\
&-\frac{d}{24} \sum_{a=0}^{d-2} (d^2 - 6ad + 6a^2) \binom{d}{a} \Bigg[ a^{a+1} - \sum_{j=2}^{a+1} (j-2)! \, a^{a+1-j} \binom{a}{j} \Bigg] (d-a)^{d-a-2} - \frac{1}{24} d^d.
\end{align}

\subsubsection*{Verification for $d=1$} 

Specialising the four lines of~\cref{eq:fourlines} to the case $d=1$, we obtain
\[
\frac{1}{24} - \frac{1}{12 \cdot 24} + \frac{1}{12 \cdot 24} - \frac{1}{24} = 0.
\]
On the other hand, we earlier computed the left side of~\cref{eq:relg1} to be $d^{d-1} \cdot \frac{d-1}{24}$, which also vanishes for $d=1$, as expected. The computation for $d \geq 2$ a priori requires higher degree calculations of the Chiodo class, which in turn requires many more stable graph contributions.

\section{Generalisation to the spin case} \label{sec:spin}

Some of the work of Goulden, Jackson and Vakil~\cite{gou-jac-vak} was generalised to the spin Hurwitz setting by Shadrin, Spitz and Zvonkine~\cite{SSZ}, who deduce polynomiality for one-part spin double Hurwitz numbers and conjecture an ELSV-type formula. In this section, we state generalisations of our earlier results -- namely,~\cref{thm:main,thm:orbifoldcomparison} --- to the spin setting, thus addressing the conjecture of Shadrin, Spitz and Zvonkine.

The double Hurwitz number $h_{g; \mu, \nu}$ may be interpreted as a relative Gromov--Witten invariant of $\mathbb{CP}^1$, in which the simple branch points correspond to insertions of $\tau_1$. For a positive integer $r$, one can more generally define the $r$-spin Hurwitz number analogously as a relative Gromov---Witten invariant of $\mathbb{CP}^1$, where the branching away from $0$ and $\infty$ corresponds to insertions of $\tau_r$. This is described in the work of Okounkov and Pandharipande on the Gromov--Witten/Hurwitz correspondence~\cite{oko-pan}, as well as in the work of Shadrin, Spitz and Zvonkine in their work on double Hurwitz numbers with completed cycles~\cite{SSZ}.

We focus on the $q$-orbifold $r$-spin Hurwitz numbers, which may be defined as the following relative Gromov--Witten invariants of $\mathbb{CP}^1$.
\[
h_{g; \mu_1, \ldots, \mu_n}^{q\textnormal{-orbifold}, r\textnormal{-spin}} = \frac{(r!)^m}{m!} \int_{[\overline{\mathcal M}_{g, m}(\mathbb{CP}^1; \mu, (q, \ldots, q))]^\text{vir}} \mathrm{ev}_1^*(\omega) \psi_1^r \cdots \mathrm{ev}_m^*(\omega) \psi_m^r
\]
Here, $\overline{\mathcal M}_{g, m}(\mathbb{CP}^1; \mu, (q, \ldots, q))$ denotes the space of stable genus $g$ maps to $\mathbb{CP}^1$ relative to 0 and $\infty$ with respective profiles $\mu$ and $(q, q, \ldots, q)$, with $m$ marked points where $m = \frac{1}{r}( 2g-2+n+|\mu|/q)$. As usual, $\mathrm{ev}_i$ is the evaluation map and we integrate over the virtual fundamental class.

Morally, this counts connected genus $g$ branched covers of $\mathbb{CP}^1$ with ramification profile $\mu$ over $\infty$, ramification profile $(q, q, \ldots, q)$ over $0$, and order $r$ branching elsewhere. The $q$-orbifold Hurwitz numbers are recovered when $r = 1$ and the single Hurwitz numbers thereafter by setting $q = 1$. The definition of the Gromov--Witten invariant takes into account stable maps, in which the domain curve may be nodal and components can map with degree zero. At the level of monodromy representations, such invariants may be described elegantly via factorisations into completed cycles in the symmetric group. This has been thoroughly described by Okounkov and Pandharipande~\cite{oko-pan}.

Zvonkine conjectured a polynomial structure for spin Hurwitz numbers akin to that for single Hurwitz numbers~\cite{zvo}. He furthermore posited an ELSV formula expressing spin Hurwitz numbers as intersection numbers on moduli spaces of spin curves. Kramer, Popolitov, Shadrin and the second author expressed a more general conjecture for orbifold spin Hurwitz numbers~\cite{KLPS}. This stronger conjecture has now been proved by Dunin-Barkowski, Kramer, Popolitov and Shadrin~\cite{DKPS}.

\begin{theorem} [Zvonkine's $q$-orbifold $r$-spin ELSV formula]
Fix positive integers $q$ and $r$. For integers $g \geq 0$ and $n \geq 1$ with $(g,n) \neq (0,1)$ or $(0,2)$, the $q$-orbifold $r$-spin Hurwitz numbers satisfy
\[
h_{g; \mu_1, \ldots, \mu_n}^{q\textnormal{-orbifold}, r\textnormal{-spin}} = r^{2g-2+n} (qr)^{\frac{(2g-2+n)q+\sum_{i=1}^n \mu_i}{qr}} \prod_{i=1}^n \frac{(\mu_i/qr)^{\lfloor \mu_i /qr \rfloor}}{\lfloor \mu_i / qr \rfloor!} \int_{\overline{\mathcal M}_{g,n}} \frac{\epsilon_* \Ch_{g,n}(qr; q; -\mu_1, \ldots, -\mu_n)}{\prod_{i=1}^n (1-\frac{\mu_i}{qr} \psi_i)}.
\]
\end{theorem}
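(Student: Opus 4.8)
The plan is to prove the formula by showing that its two sides are computed by one and the same topological recursion, following the strategy developed across \cite{SSZ2, LPSZ, KLPS, BKLPS, DKPS}; in the present generality the identity is precisely the theorem of Dunin-Barkowski, Kramer, Popolitov and Shadrin \cite{DKPS}. First, on the enumerative side, I would pass to the semi-infinite wedge formalism and use the Okounkov--Pandharipande description of these relative Gromov--Witten invariants in terms of insertions of the completed cycle $\tau_r$ \cite{oko-pan}. The associated partition function is a tau-function of an integrable hierarchy, and a cut-and-join analysis produces an explicit spectral curve together with the statement that the $n$-point correlation differentials of the $q$-orbifold $r$-spin Hurwitz numbers obey topological recursion on it. Because of the completed $r$-cycle, the relevant ramification point has order $r+1$, so one must work with the Bouchard--Eynard generalisation of the recursion rather than its original form.

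Second, on the intersection-theoretic side, I would use that the pushforwards $\epsilon_* \Ch_{g,n}(qr; q; -\mu_1, \ldots, -\mu_n)$ assemble into a cohomological field theory which, for the parameter range at hand ($0 \le q \le qr$), is semisimple with flat unit \cite{LPSZ}. By the Givental--Teleman classification this CohFT is the $R$-matrix action on a topological field theory, and the entries of $R$ are read off from the Bernoulli-polynomial coefficients in Chiodo's Chern character formula \cref{eq:chiodoformula}. Feeding this $R$-matrix into the Dunin-Barkowski--Orantin--Shadrin--Spitz correspondence expresses the integrals $\int_{\oM_{g,n}} \epsilon_* \Ch_{g,n} / \prod_i (1 - \tfrac{\mu_i}{qr}\psi_i)$ as the topological recursion invariants of a second spectral curve, whose local data are dictated by $R$ and by the translation encoding the $\psi$-class denominators.

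The heart of the argument --- and the step I expect to be the main obstacle --- is to show that these two spectral curves coincide, and that the prefactor $r^{2g-2+n}(qr)^{((2g-2+n)q+\sum_i\mu_i)/(qr)} \prod_i (\mu_i/qr)^{\lfloor \mu_i/qr\rfloor}/\lfloor \mu_i/qr\rfloor!$ accounts exactly for the difference between the two normalisations. Concretely, one must verify that the $R$-matrix extracted from the Chiodo Chern characters reproduces the local expansion of $x$ and $y$ at the order-$(r+1)$ ramification point of the Hurwitz spectral curve. This identification is delicate precisely because of the higher ramification: matching the higher-order (generalised Airy) structure of the curve to the coefficients $B_{k+1}(a_i/r)$ of \cref{eq:chiodoformula} requires the careful quantisation and bookkeeping carried out in \cite{DKPS}. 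Once the spectral curves are identified, uniqueness of the topological recursion invariants yields the stated equality for all $g$ and $n$, after which the unstable conventions for $(g,n) = (0,1), (0,2)$ are handled separately.
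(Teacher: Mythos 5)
This theorem is not proved in the paper at all: it is imported as a known result, namely the resolution of Zvonkine's conjecture (in its orbifold form, stated in \cite{KLPS}) by Dunin-Barkowski, Kramer, Popolitov and Shadrin \cite{DKPS}, following the chain \cite{SSZ2, LPSZ, KLPS, BKLPS}. Your proposal ultimately rests on the same citation, and the architecture you sketch --- the Hurwitz side shown to satisfy topological recursion starting from the Okounkov--Pandharipande completed-cycle description in the semi-infinite wedge formalism; the Chiodo-class side identified with the recursion invariants of a spectral curve via Givental--Teleman and the Dunin-Barkowski--Orantin--Shadrin--Spitz correspondence, with the $R$-matrix read off from \cref{eq:chiodoformula}; then a matching of curves and prefactors --- is indeed the architecture of that series of papers. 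In that sense your proposal is consistent with the paper's treatment.

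Two of your technical claims, however, misdescribe the cited proof. First, there is no higher-order ramification: the spectral curve for $q$-orbifold $r$-spin Hurwitz numbers is (up to normalisation) $x(z) = \log z - z^{qr}$, $y(z) = z^{q}$, whose $qr$ ramification points (the solutions of $z^{qr} = \frac{1}{qr}$) are all simple. The completed $(r+1)$-cycles enter through the exponent $qr$ in $x$, not through an order-$(r+1)$ branch point, so the standard Eynard--Orantin recursion applies and no Bouchard--Eynard generalisation or generalised Airy structure is involved; this is essential, since the DOSS correspondence invoked on the Chiodo side is formulated for curves with simple ramification. (You may be conflating this with the Witten $r$-spin class, where the relevant local curve does have a single higher-order critical point.) Second, you locate the main difficulty in showing that the two spectral curves coincide and that the prefactors match; that identification is the --- involved but essentially computational --- content of \cite{LPSZ, KLPS}. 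The genuinely hard step, and the reason the conjecture remained open until \cite{DKPS}, is proving that the Hurwitz correlation differentials satisfy topological recursion at all: the cut-and-join analysis yields quasi-polynomiality and the quantum curve, but the recursion itself required establishing the linear and quadratic loop equations together with the projection property, which \cite{BKLPS} achieved only in special cases and \cite{DKPS} in general.
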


The work of Goulden, Jackson and Vakil for double Hurwitz numbers was generalised by Shadrin, Spitz and Zvonkine~\cite{SSZ} to derive a polynomial structure for double spin Hurwitz numbers. It is then natural to define one-part double spin Hurwitz numbers as follows.

\begin{definition}
Let $h_{g; \mu}^{(r), \textnormal{one-part}}$ denote the double spin Hurwitz number $h_{g; \mu; \nu}^{r\textnormal{-spin}}$, where $\nu$ is the partition with precisely one part, which is equal to $|\mu|$.
\end{definition}

The polynomiality derived by Shadrin, Spitz and Zvonkine then leads one to conjecture an ELSV formula for these numbers. The arguments used in the present paper lift naturally to the spin setting. We present spin analogues for our main results below, without proof, since the arguments parallel those used earlier.

\begin{theorem}[ELSV formulas for one-part double spin Hurwitz numbers] \label{thm:spin}
Fix a positive integer $r$. For integers $g \geq 0$ and $n \geq 1$ with $(g,n) \neq (0,1)$ or $(0,2)$, the one-part double spin Hurwitz numbers satisfy the following formulas, where $d = \mu_1 + \cdots + \mu_n$.
\begin{itemize}
\item Chiodo classes on moduli spaces of spin curves
\begin{equation} \label{eq:spinELSVdChiodo}
h_{g; \mu_1, \ldots, \mu_n}^{(r), \textnormal{one-part}} = r^{2g-2+n} (dr)^{\frac{2g-1+n}{r} - (3g-3+n)} \int_{\overline{\mathcal M}_{g,n;-\mu}^{dr, d}} \frac{\textnormal{Chiodo}_{g,n}(dr, d; -\mu_1, \ldots, -\mu_n)}{\prod_{i=1}^n (1 - \mu_i \psi_i)}
\end{equation}

\item Tautological classes on moduli spaces of stable curves
\begin{equation} \label{eq:spinELSVdMgn}
h_{g; \mu_1, \ldots, \mu_n}^{(r), \textnormal{one-part}} = r^{2g-2+n} (dr)^{\frac{2g-1+n}{r} - (3g-3+n)} \int_{\overline{\mathcal M}_{g,n}} \frac{\epsilon_* \textnormal{Chiodo}_{g,n}(dr, d; -\mu_1, \ldots, -\mu_n)}{\prod_{i=1}^n (1 - \mu_i \psi_i)}
\end{equation}
\end{itemize}
\end{theorem}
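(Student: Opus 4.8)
The plan is to obtain both formulas as specialisations of Zvonkine's $q$-orbifold $r$-spin ELSV formula, in exact parallel with the proof of~\cref{thm:main}. The pivotal observation is that the one-part condition is the orbifold condition in disguise: writing $d = \mu_1 + \cdots + \mu_n = |\mu|$, a profile $(q, q, \ldots, q)$ of total degree $d$ over $0$ with $q = d$ has a single part $(d)$, which is precisely the one-part profile. Since the one-part versus orbifold distinction concerns only the ramification over $0$, while the $\tau_r$ (completed $r$-cycle) insertions at the remaining branch points are identical in both counts, this gives the identity $h_{g; \mu}^{(r), \textnormal{one-part}} = h_{g; \mu}^{d\textnormal{-orbifold}, r\textnormal{-spin}}$. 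The first step is to verify this identity directly from the relative Gromov--Witten definition.

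Next I would set $q = d$ in Zvonkine's formula and simplify. Each combinatorial factor $\frac{(\mu_i/dr)^{\lfloor \mu_i/dr \rfloor}}{\lfloor \mu_i/dr \rfloor!}$ equals $1$, since $\mu_i \leq d$ forces $\mu_i/(dr) < 1$ and hence $\lfloor \mu_i/dr \rfloor = 0$. The exponent of $(qr) = (dr)$ collapses using $\sum_i \mu_i = d$, namely $\frac{(2g-2+n)d + d}{dr} = \frac{2g-1+n}{r}$. This yields the intermediate identity
\[
h_{g; \mu}^{(r), \textnormal{one-part}} = r^{2g-2+n} (dr)^{\frac{2g-1+n}{r}} \int_{\oM_{g,n}} \frac{\epsilon_* \Ch_{g,n}(dr, d; -\mu_1, \ldots, -\mu_n)}{\prod_{i=1}^n (1 - \frac{\mu_i}{dr} \psi_i)}.
\]

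To pass to~\cref{eq:spinELSVdMgn}, I would apply the degree-rescaling operator that multiplies each cohomology class of degree $2k$ by $(dr)^k$, exactly as in the passage from~\cref{eq:JPTalternative} to~\cref{eq:ELSVdMgn}. This operator converts $\Ch_{g,n}(dr, d; -\mu)$ into $\Ch_{g,n}^{[dr]}(dr, d; -\mu)$ and the denominator $\prod_i (1 - \frac{\mu_i}{dr} \psi_i)$ into $\prod_i (1 - \mu_i \psi_i)$; because it commutes with the finite pushforward $\epsilon_*$ and scales the top-degree part of the integrand by $(dr)^{3g-3+n}$, the compensating global factor $(dr)^{-(3g-3+n)}$ produces precisely the shifted prefactor in~\cref{eq:spinELSVdMgn}. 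The spin-curve formula~\cref{eq:spinELSVdChiodo} then follows immediately, since Zvonkine's formula is itself obtained by pushing Chiodo classes forward along $\epsilon : \oM_{g,n;-\mu}^{dr,d} \to \oM_{g,n}$; the projection formula together with $\psi_i = \epsilon^* \psi_i$ transports the integral back to the moduli space of spin curves, with the rescaling performed there instead.

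I do not anticipate a genuine obstruction, as the whole argument is a specialisation of an already established formula. The only step requiring real care is the first: one must confirm, from the completed-cycles description of Okounkov and Pandharipande, that the $r$-spin count with a single part over $0$ really does coincide with the $d$-orbifold $r$-spin count, so that the rest of the argument proceeds verbatim. The remaining effort is the routine bookkeeping of the powers of $d$ and $r$ as they pass through the rescaling.
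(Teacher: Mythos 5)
Your proposal is correct and is precisely the argument the paper intends: \cref{thm:spin} is stated there without proof, with the remark that it follows by the same specialisation used for \cref{thm:main} --- namely, observing that $h_{g;\mu}^{(r),\textnormal{one-part}} = h_{g;\mu}^{d\textnormal{-orbifold},r\textnormal{-spin}}$, setting $q=d$ in Zvonkine's $q$-orbifold $r$-spin ELSV formula (which kills the combinatorial prefactors and collapses the exponent to $\frac{2g-1+n}{r}$), and then rescaling cohomological degrees by $(dr)^k$. One remark: your bookkeeping correctly produces the rescaled class $\Ch^{[dr]}_{g,n}(dr,d;-\mu_1,\ldots,-\mu_n)$ in \cref{eq:spinELSVdMgn}, consistent with the pattern of \cref{eq:ELSVdMgn} and with \cref{prop:final}, whereas the printed statement of \cref{thm:spin} omits the superscript $[dr]$; this is a typo in the paper rather than a gap in your argument.
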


\begin{remark}
\cref{thm:spin} generalises~\cref{thm:main}, but lacks ELSV formulas on the moduli space of stable maps to the classifying space $\mathcal{B} \mathbb{Z}_d$, as well as on the moduli space $\overline{\mathcal M}_{g,n+g}$. Regarding the former, the moduli space $\overline{\mathcal{M}}^{d,d}_{g,n; -\mu_1, \ldots, -\mu_n}$ appearing in~\cref{eq:ELSVdChiodo} bears a close relation with the moduli space $\overline{\mathcal{M}}_{g; -\mu_1, \ldots, -\mu_n}(\mathcal{B}\mathbb{Z}_d)$ appearing in~\cref{eq:ELSVdJPT}. On the other hand, the spin case that we consider in this section requires the more general class $\textnormal{Chiodo}_{g,n}(dr, d; -\mu_1, \ldots, -\mu_n)$, in which the first two parameters do not match unless $r = 1$. In that case, the relation to moduli spaces of stable maps to the classifying space $\mathcal{B} \mathbb{Z}_d$ is not expected. Regarding the latter, one should be able to compute a dilaton equation for the Chiodo classes involved in~\cref{eq:spinELSVdMgn} by combining the topological recursion techniques and results of~\cite{do-lei-nor} and~\cite{LPSZ}. This would then allow us to obtain an ELSV formula for one-part double spin Hurwitz numbers on the space $\overline{\mathcal M}_{g,n+g}$, analogous to~\cref{eq:ELSVdMgng} of~\cref{thm:main}. However, such a computation transcends the goal of this paper, so we do not perform it here.
\end{remark}

The exchange of ramification profiles used in~\cref{sec:comparison} can be invoked in the spin setting via the equation
\[
\frac{h_{g; p, \ldots, p}^{q\textnormal{-orbifold}, r\textnormal{-spin}}}{(d/p)!} = \frac{h_{g; q, \ldots, q}^{p\textnormal{-orbifold}, r\textnormal{-spin}}}{(d/q)!}.
\]
This leads directly to the following non-trivial relation between tautological intersection numbers on moduli spaces of curves, which generalises~\cref{thm:doubleorbifoldcomparison}.

\begin{theorem} \label{thm:spincomparison}
Let $p < q$ be positive integers and $d$ a multiple of them both.
For integers $g \geq 0$ and $d \geq 1$ with $(g,d/p), (g, d/q) \neq (0,1)$ or $(0,2)$, we have
\begin{multline*}
\frac{r^{d/p}}{(d/p)! \, p^{2g-2+\frac{d}{p}+\frac{d}{q}}} \int_{\overline{\mathcal M}_{g,d/p}} \frac{\epsilon_* \mathrm{Chiodo}(qr, q; -p, \ldots, -p)}{\prod_{i=1}^{d/p} (1 - \frac{p}{qr} \psi_i)} \\
= \frac{r^{d/q}}{(d/q)! \, q^{2g-2+\frac{d}{p}+\frac{d}{q}}} \left( \frac{(q/pr)^{\lfloor q/pr \rfloor}}{\lfloor q/pr \rfloor!} \right)^{d/q} \int_{\overline{\mathcal M}_{g,d/q}} \frac{\epsilon_* \mathrm{Chiodo}(pr, p; -q, \ldots, -q)}{\prod_{i=1}^{d/q} (1 - \frac{q}{pr} \psi_i)}.
\end{multline*}
\end{theorem}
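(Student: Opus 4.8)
The plan is to mirror the derivation of \cref{thm:doubleorbifoldcomparison}, replacing the orbifold ELSV formula of \cref{eq:JPTalternative} with Zvonkine's $q$-orbifold $r$-spin ELSV formula stated above. The starting point is the exchange-of-profiles identity
\[
\frac{h_{g; p, \ldots, p}^{q\textnormal{-orbifold}, r\textnormal{-spin}}}{(d/p)!} = \frac{h_{g; q, \ldots, q}^{p\textnormal{-orbifold}, r\textnormal{-spin}}}{(d/q)!},
\]
which I would justify exactly as in \cref{subsec:exchange}: both relative Gromov--Witten invariants enumerate the same covers of $\mathbb{CP}^1$, now with order-$r$ (completed-cycle) branching in the interior, after swapping the roles of $0$ and $\infty$. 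The factorials $(d/p)!$ and $(d/q)!$ compensate for the $|\mathrm{Aut}(\mu)|$ appearing in the weight \cref{eq:hurwitzweight} when $\mu = (p,\ldots,p)$ or $\mu = (q,\ldots,q)$.

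Next I would substitute Zvonkine's formula into each side. On the left, with $\mu = (p, \ldots, p)$, $n = d/p$ and $|\mu| = d$, the combinatorial product trivialises: since $p < q \le qr$ we have $\lfloor p/qr \rfloor = 0$, so $\prod_{i} \frac{(p/qr)^{\lfloor p/qr \rfloor}}{\lfloor p/qr \rfloor!} = 1$. On the right, with $\mu = (q, \ldots, q)$ and $n = d/q$, the surviving factor is $\big(\frac{(q/pr)^{\lfloor q/pr \rfloor}}{\lfloor q/pr \rfloor!}\big)^{d/q}$, which is precisely the coefficient displayed in the theorem. This produces the two integrals $\int_{\oM_{g,d/p}} \epsilon_*\Ch(qr,q;-p,\ldots,-p)\,/\,\prod(1-\tfrac{p}{qr}\psi_i)$ and $\int_{\oM_{g,d/q}} \epsilon_*\Ch(pr,p;-q,\ldots,-q)\,/\,\prod(1-\tfrac{q}{pr}\psi_i)$ verbatim, so \emph{no} rescaling of the Chiodo classes to their $\Ch^{[x]}$ variants is needed here.

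The remaining work is bookkeeping of the scalar prefactors, and the crux is the simplification of the two exponents. Using $d = p\cdot(d/p) = q\cdot(d/q)$, one checks that both
\[
\frac{(2g-2+d/p)\,q + d}{qr} \qquad \text{and} \qquad \frac{(2g-2+d/q)\,p + d}{pr}
\]
collapse to the common value $\tfrac{1}{r}\big(2g-2+\tfrac{d}{p}+\tfrac{d}{q}\big)$. Hence the factors $(qr)^{\bullet}$ and $(pr)^{\bullet}$ each contribute the \emph{same} power of $r$; combining this with the $r^{2g-2+n}$ on each side and dividing the identity through by the common power $r^{2g-2+\bullet}$ reduces the $r$-dependence to $r^{d/p}$ on the left and $r^{d/q}$ on the right, after which dividing by the appropriate symmetric power of $pq$ isolates the claimed relation.

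I expect this power-matching to be the only genuinely delicate step --- the place where exponent errors are easy to make --- and I would pin it down by demanding that the $r=1$ specialisation recover \cref{thm:doubleorbifoldcomparison} exactly, which fixes the symmetric $p,q$-exponent unambiguously. The one hypothesis to verify along the way is the integrality of $m = \tfrac{1}{r}(2g-2+n+|\mu|/q)$ on each side, together with the Chiodo divisibility condition on the class parameters, both of which are needed for the underlying Gromov--Witten invariants and the spin ELSV formula to be defined; these are precisely the constraints encoded in the hypotheses $(g,d/p),(g,d/q)\neq(0,1)$ or $(0,2)$.
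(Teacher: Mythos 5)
Your overall strategy is identical to the paper's: the paper offers no detailed proof of \cref{thm:spincomparison}, deriving it by stating the exchange-of-profiles identity and declaring that substitution of Zvonkine's $q$-orbifold $r$-spin ELSV formula into both sides leads directly to the result. Your handling of the individual ingredients is also correct: the combinatorial product trivialises on the $(p,\ldots,p)$ side because $\lfloor p/qr\rfloor = 0$, the factor $\bigl(\tfrac{(q/pr)^{\lfloor q/pr\rfloor}}{\lfloor q/pr\rfloor!}\bigr)^{d/q}$ survives on the $(q,\ldots,q)$ side, no passage to the rescaled classes $\Ch^{[x]}$ is needed, and both exponents indeed collapse to the common value $\tfrac{1}{r}\bigl(2g-2+\tfrac{d}{p}+\tfrac{d}{q}\bigr)$.

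However, the step you leave implicit --- ``dividing by the appropriate symmetric power of $pq$ isolates the claimed relation'' --- is exactly where the argument fails to close, and your proposed safeguard cannot catch it. Write $S = 2g-2+\tfrac{d}{p}+\tfrac{d}{q}$. Completing your computation, division of the exchange identity by $r^{2g-2+S/r}$ gives
\[
\frac{r^{d/p}\, q^{S/r}}{(d/p)!}\, \int_{\oM_{g,d/p}} \frac{\epsilon_*\Ch(qr,q;-p,\ldots,-p)}{\prod_{i=1}^{d/p}\bigl(1-\tfrac{p}{qr}\psi_i\bigr)}
= \frac{r^{d/q}\, p^{S/r}}{(d/q)!} \left(\frac{(q/pr)^{\lfloor q/pr\rfloor}}{\lfloor q/pr\rfloor!}\right)^{d/q} \int_{\oM_{g,d/q}} \frac{\epsilon_*\Ch(pr,p;-q,\ldots,-q)}{\prod_{i=1}^{d/q}\bigl(1-\tfrac{q}{pr}\psi_i\bigr)},
\]
so dividing by $(pq)^{S/r}$ leaves $p^{S/r}$ and $q^{S/r}$ in the denominators, \emph{not} $p^{S}$ and $q^{S}$ as in the statement you are proving. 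For $r>1$ the two relations differ by the factor $(q/p)^{S(1-1/r)}\neq 1$, so they cannot both hold unless the integrals vanish; your derivation, carried out honestly, yields the exponent $S/r$. Your fallback of pinning the exponent by requiring the $r=1$ specialisation to recover \cref{thm:doubleorbifoldcomparison} is powerless precisely here: $S$ and $S/r$ coincide at $r=1$, so that check cannot distinguish them --- the very failure mode you flagged as ``easy to make''. The honest conclusion is that the Hurwitz-theoretic derivation supports the exponent $\tfrac{1}{r}\bigl(2g-2+\tfrac{d}{p}+\tfrac{d}{q}\bigr)$, and a correct write-up must either prove that version and note the discrepancy with the printed statement, or supply an argument that the paper itself does not contain. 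A secondary slip: the integrality of the number of completed cycles $m = S/r$ is not ``encoded in'' the stability hypotheses $(g,d/p),(g,d/q)\neq(0,1),(0,2)$; it is an independent divisibility constraint, and when $r \nmid S$ both spin Hurwitz numbers vanish, so the exchange identity carries no information about the integrals in that case.
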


\subsection{Evaluations of Chiodo integrals from  the spin case}
It is very natural to ask whether the type of polynomiality found by Goulden, Jackson, and Vakil in \cite{gou-jac-vak} for one-part Hurwitz numbers of \cref{eq:gen1} and \cref{eq:gen2} has a spin counterpart. In  particular, one could wonder whether that would give raise to statements involving generating series of Chiodo integrals (i.e. the spin counterparts of \cref{prop:genmu1} and \cref{prop:genmud}), and if so one would expect them to be again in terms of hyperbolic functions.

The spin counterpart of the polynomiality can easily be recovered from a semi-infinite wedge calculation (see e.g. \cite[Example 4.5]{SSZ}) and it reads:
\begin{equation}
h^{(r), \textnormal{one part}}_{g,\mu} = [z_1^r \dots z_b^r]. d^{b-1} \prod_{j=1}^b \SSS(d z_j) \prod_{i=1}^n \SSS\left(\mu_i z_{[b]})\right) \frac{z_{[b]}^{n-1}}{\SSS( z_{[b]})}, \quad \quad b= \frac{2g - 1 + n}{r},
\end{equation}
for $z_{[b]} = \sum_{j=1}^b z_j$, $d = \sum_{i=1} \mu_i$, and $\SSS(x) = \sinh(x/2)/(x/2)$ as before. Combining this with \cref{thm:spin} we immediately obtain the following.

\begin{proposition} \label{prop:final} Let $r$ be a positive integer. For integers $g \geq 0$ and a partition $\mu$ of length $n$ such that $2g - 2+ n >0$, let $d$ be the size of $\mu$. Then we have:
\begin{align*}
\int_{\overline{\mathcal M}_{g,n}}
 \!\!\!\! \frac{\epsilon_* \textnormal{Chiodo}^{[dr]}_{g,n}(dr, d; -\mu_1, \ldots, -\mu_n)}{\prod_{i=1}^n (1 - \mu_i \psi_i)}
=
  \frac{d^{3g - 4 + n}}{ r^{A(r)}} [z_1^r \dots z_b^r]. \prod_{j=1}^b \SSS(d z_j) \prod_{i=1}^n \SSS\left(\mu_i z_{[b]})\right) \frac{z_{[b]}^{n-1}}{\SSS( z_{[b]})}
\end{align*} 
for $A(r) = 1 - g + b$ and $b = (2g - 1+n)/r$.
\end{proposition}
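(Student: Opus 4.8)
The plan is to read off the proposition directly by combining the two displayed formulas that precede it: the tautological spin ELSV formula of~\cref{eq:spinELSVdMgn} and the spin polynomiality expression for $h^{(r),\textnormal{one part}}_{g,\mu}$. First I would solve~\cref{eq:spinELSVdMgn} for the Chiodo integral, obtaining
\[
\int_{\overline{\mathcal M}_{g,n}} \frac{\epsilon_* \Ch^{[dr]}_{g,n}(dr, d; -\mu_1, \ldots, -\mu_n)}{\prod_{i=1}^n (1 - \mu_i \psi_i)} = \frac{h^{(r),\textnormal{one part}}_{g,\mu}}{r^{2g-2+n}\,(dr)^{(2g-1+n)/r - (3g-3+n)}}.
\]
The integrand here already carries the $[dr]$-twisted Chiodo class appearing in the proposition; this is the spin analogue of the degree-rescaling used in the proof of~\cref{thm:main} to pass from~\cref{eq:JPTalternative} to~\cref{eq:ELSVdMgn}. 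Concretely, multiplying each class of cohomological degree $2k$ by $(dr)^k$ sends $\psi_i \mapsto (dr)\psi_i$ and $\Ch \mapsto \Ch^{[dr]}$, commutes with the generically finite degree-preserving pushforward $\epsilon_*$, converts the orbifold denominator $\prod_i (1-\tfrac{\mu_i}{dr}\psi_i)$ into $\prod_i(1-\mu_i\psi_i)$, and contributes the global factor $(dr)^{3g-3+n}$ that accounts for the term $-(3g-3+n)$ in the exponent above.

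Next I would substitute the stated spin polynomiality formula into the numerator and collect powers. Writing $b=(2g-1+n)/r$, so that $(dr)^{(2g-1+n)/r}=(dr)^b=d^b r^b$, the numerator supplies a factor $d^{b-1}$ in front of the coefficient extraction $[z_1^r\cdots z_b^r]$, while the denominator supplies $d^{\,b-(3g-3+n)}\,r^{\,2g-2+n+b-(3g-3+n)}$. Dividing, the powers of $d$ combine to $(b-1)-\bigl(b-(3g-3+n)\bigr)=3g-4+n$, and the powers of $r$ in the denominator combine to $2g-2+n+b-(3g-3+n)=1-g+b$, which is exactly $A(r)$. This leaves precisely the prefactor $d^{3g-4+n}/r^{A(r)}$ multiplying the hyperbolic generating series $[z_1^r\cdots z_b^r]\,\prod_{j=1}^b \SSS(dz_j)\,\prod_{i=1}^n \SSS(\mu_i z_{[b]})\,z_{[b]}^{n-1}/\SSS(z_{[b]})$, which is the claimed identity.

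There is no genuinely hard step: the proposition is a formal consequence of~\cref{thm:spin} together with the stated polynomiality, and the only substance is the bookkeeping of the powers of $d$ and $r$. The single point requiring care is the appearance of the $[dr]$-twisted Chiodo class, which must be tracked through the degree-rescaling operation; since this operation acts on $\psi_i$ and on each $\ch_k$ by the expected scalars and commutes with $\epsilon_*$, it introduces exactly the compensating factor $(dr)^{3g-3+n}$ recorded above. The hypothesis $2g-2+n>0$ ensures $b>0$ and $3g-3+n\geq 0$, so that both the coefficient extraction and the integration over $\overline{\mathcal M}_{g,n}$ are meaningful.
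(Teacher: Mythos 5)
Your proposal is correct and is essentially the paper's own argument: the paper obtains \cref{prop:final} by "combining" the spin polynomiality series with \cref{eq:spinELSVdMgn} of \cref{thm:spin}, exactly as you do, and your bookkeeping of the powers of $d$ and $r$ (giving $3g-4+n$ and $A(r)=1-g+b$) matches. Your explicit tracking of the $[dr]$-twist is also the right reading of \cref{eq:spinELSVdMgn} (the paper omits the superscript there, a notational slip), since consistency with Zvonkine's $q$-orbifold $r$-spin ELSV formula forces the twisted class once the denominator is written as $\prod_i(1-\mu_i\psi_i)$.
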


One can indeed immediately specialise the proposition above to the cases $\mu= (1^d)$ and $\mu = (d)$, obtaining this way statements which for $r>1$ generalise \cref{prop:genmu1} and \cref{prop:genmud}, respectively. However, as the formulae do not structurally simplify, we prefer not to write them out explicitly. Instead, we would like to point out that \cref{prop:final} provides an explicit power-series expansion computation, easily software-implementable, to evaluate any Chiodo integral of the form
\begin{equation}
\int_{\overline{\mathcal M}_{g,n}}
 \!\!\!\!\!\!\! \frac{\epsilon_* \textnormal{Chiodo}^{[r]}_{g,n}(r, s; r-\mu_1, \ldots, r-\mu_n)}{\prod_{i=1}^n (1 - \mu_i \psi_i)}, \qquad \text{ for } \sum_{i=1} \mu_i = s \quad \text{ and } \quad s|r.
\end{equation}

\appendix

\section{Data} \label{sec:data}

We provide some calculations of one-part double Hurwitz numbers up to genus five below as polynomials in the parts, using the notation $\mu = (\mu_1, \ldots, \mu_n)$ and $d = \mu_1 + \cdots + \mu_n$. 
The original paper \cite[Corollary 3.3]{gou-jac-vak} also provides these polynomials up to genus five, expressing them instead in terms of the coefficients $S_{2j} = \sum \mu_i^{2j} - 1$.
These calculations have been however re-computed by means of cut and join equation as an independent check.  Note the structure 
\(
h_{g; \mu_1, \ldots, \mu_n}^{\textnormal{one-part}} = d^{2g-2+n} P_{g,n}(\mu_1^2, \ldots, \mu_n^2),
\)
for $P_{g,n}$ a symmetric polynomial of degree $2g$, mentioned in~\cref{sec:introduction}.
\begin{align*}
h_{0;\mu}^{\textnormal{one-part}} &= d^{n-2} \\
h_{1;\mu}^{\textnormal{one-part}} &= \frac{d^n}{24} (\mu_1^2 + \mu_2^2 + \cdots + \mu_n^2 - 1) \\
h_{2;\mu}^{\textnormal{one-part}} &= \frac{d^{n+2}}{5760} \Big( 3 \sum \mu_i^4 + 10 \sum \mu_i^2 \mu_j^2 - 10 \sum \mu_i^2 + 7 \Big) \\
h_{3;\mu}^{\textnormal{one-part}} &= \frac{d^{n+4}}{967680} \Big( 3 \sum \mu_i^6 + 21 \sum \mu_i^4\mu_j^2 + 70 \sum \mu_i^2\mu_j^2\mu_k^2 - 21 \sum \mu_i^4 - 70 \sum \mu_i^2\mu_j^2 + 49 \sum \mu_i^2 - 31 \Big) \\
h_{4;\mu}^{\textnormal{one-part}} &= \frac{d^{n+6}}{464486400} \Big( 5 \sum \mu_i^8 + 60 \sum \mu_i^6 \mu_j^2 + 126 \sum \mu_i^4 \mu_j^4 - 60 \sum \mu_i^6 - 420 \sum \mu_i^4 \mu_j^2 + 294 \sum \mu_i^4 \\
&\mkern-18mu+ 980 \sum \mu_i^2 \mu_j^2 - 620 \sum \mu_i^2 + 381 \Big) \\
h_{5;\mu}^{\textnormal{one-part}} &= \frac{d^{n+8}}{122624409600} \Big( 3 \sum \mu_i^{10} + 55 \sum \mu_i^8 \mu_j^2 + 198 \sum \mu_i^6 \mu_j^4 - 55 \sum \mu_i^8 - 660 \sum \mu_i^6 \mu_j^2 \\
&\mkern-18mu- 1386 \sum \mu_i^4 \mu_j^4 + 462 \sum \mu_i^6 + 3234 \sum \mu_i^4 \mu_j^2 - 2046 \sum \mu_i^4 - 6820 \sum \mu_i^2 \mu_j^2 + 4191 \sum \mu_i^2 - 2555 \Big)
\end{align*}

\bibliographystyle{plain}
\bibliography{GJV-conjecture.bib}

\end{document}